\newtheorem{theorem}{Theorem} 
\newtheorem{lemma}[theorem]{Lemma}
\newtheorem{corollary}[theorem]{Corollary}
\newtheorem{proposition}[theorem]{Proposition}
\newcommand{\cA}{\mathcal{A}}
\newcommand{\cB}{\mathcal{B}}
\newcommand{\cH}{\mathcal{H}}
\newcommand{\cX}{\mathcal{X}}
\newcommand{\ex}{{\rm  ex}}
\newcommand{\pre}{{\rm pre}}
\newcommand{\suf}{{\rm suf}}
\newcommand{\st}{$\mathord{\star}$}
\begin{document}
	\title{On graphs embeddable in a layer of a hypercube and their extremal numbers}
	\date{\vspace{-5ex}}
	\author{
		Maria Axenovich
		\thanks{Karlsruhe Institute of Technology, Karlsruhe, Germany;
			email:
			\mbox{\texttt{maria.aksenovich@kit.edu}}
}
 ~~ Ryan R.  Martin  
\thanks{Iowa State University, Ames, Iowa, U.S.A.;
			email:
			 \texttt{rymartin@iastate.edu}}						
~~Christian Winter			
\thanks{Karlsruhe Institute of Technology, Karlsruhe, Germany;
			email:
			\mbox{\texttt{christian.winter@kit.edu}}
}}
			
	\maketitle
	
	\begin{abstract}
	A graph is cubical if it is a subgraph of a hypercube. For a cubical graph $H$ and a hypercube $Q_n$, $\ex(Q_n, H)$ is the largest number of edges in an $H$-free subgraph of $Q_n$.
If $\ex(Q_n, H)$ is at least  a positive proportion of the number of edges in $Q_n$, then $H$ is said to have positive Tur\'an density in the hypercube; otherwise it has zero Tur\'an density.   Determining $\ex(Q_n, H)$ and even identifying whether $H$ has positive or zero Tur\'an density remains a widely open question for general $H$. 
	
 In this paper we focus on layered graphs, i.e., graphs that  are contained in an edge-layer of some hypercube. Graphs $H$ that are not layered have positive Tur\'an density because one can form an $H$-free subgraph of $Q_n$ consisting of edges of every other layer.  For example, a $4$-cycle is not layered and has positive Tur\'an density.  
 
However, in general it is not obvious what properties layered graphs have. We give a characterisation of layered graphs in terms of edge-colorings.
We show that most non-trivial subdivisions have zero Tur\'an density, extending known results on zero Tur\'an density of  even cycles of length at least $12$ and of length $8$. However,  we prove that there are cubical graphs of girth $8$ that are not layered and thus having positive Tur\'an density. 
The cycle of length $10$ remains the only cycle for which it is not known whether its Tur\'an density is positive or not.   We prove that $\ex(Q_n, C_{10})= \Omega(n2^n/ \log^a n)$, for a constant $a$, showing that the extremal number for a $10$-cycle behaves differently from any other cycle of zero Tur\'an density. 
	\end{abstract}

	\section{Introduction}
	The {\it hypercube} $Q_n$, where $n$ is a natural number, is a graph on  a vertex set $\{A: A\subseteq [n]\}$ and an edge set consisting of all pairs $\{A,B\}$, where $A\subseteq B$ and $|A|=|B|-1$. Here, $[n]= \{1, \ldots, n\}$. We often identify vertices of $Q_n$ with binary vectors that are indicator vectors of respective sets.  If a graph is a subgraph of $Q_n$, for some $n$, it is called {\it cubical}. We denote the number of vertices and the number of edges in a graph $H$ by  $|H|$ and $||H||$, respectively. \\
	
For a graph $H$, let the {\it extremal number}  of $H$ in $Q_n$, denoted 	$\ex(Q_n, H)$,  be the largest number of edges in a subgraph $G$ of  $Q_n$ such that there is no subgraph of $G$ isomorphic to $H$. 
A graph $H$ is said to have {\it zero Tur\'an density in the hypercube} if $\ex(Q_n, H) = o(||Q_n||)$. Otherwise, we say that $H$ has  {\it positive Tur\'an density in the hypercube}. Note that by using a standard double counting argument,  the sequence $\ex(Q_n, H)/||Q_n||$ is non-increasing, thus the above density notions are well-defined. When clear from context, we simply  say {\it Tur\'an density}  instead of Tur\'an density in a hypercube. 
The behaviour of the function $\ex(Q_n, H)$ is not well understood in general and it is not even known what graphs have positive or zero Tur\'an density.
Currently, the only known cubical graphs of positive Tur\'an density are those containing a $4$- or a $6$-cycle as a subgraph.
Conlon \cite{C} observed a connection between extremal numbers in the hypercube and classical extremal numbers for uniform hypergraphs. That permitted the determination of a large class of  graphs with zero Tur\'an density. For more results on extremal numbers in the hypercube, see  \cite{baber, balogh, AKS, ARSV, AM,TW, O}.\\

Another class of graphs that are of particular importance as a superset of all graphs of zero Tur\'an density corresponds to  so-called layered graphs. The $k$th {\it vertex layer}, denoted $V_k$,  of $Q_n$ is $\binom{[n]}{k}$, the set of all vertices that are $k$-element subsets of $[n]$. The $k$th {\it edge layer} of $Q_n$ is the subgraph of $Q_n$ induced by the $k$th and $(k-1)$st vertex layers.  For other standard graph theoretic notions, we refer the reader to Diestel \cite{D}. 
 A cubical graph is called {\it layered} if it is a subgraph of some edge layer of $Q_n$, for some $n$.  Note for example, that $C_4$ is not layered and $C_{2\ell}$  is layered for  any $\ell\geq 3$. It is an easy observation that cubical graphs that are not layered have positive Tur\'an density. Indeed,  a subgraph of $Q_n$ that is a union of its even (or odd) edge layers contains only layered connected graphs as subgraphs.   \\

In this paper, we focus on layered graphs.  First, we  give a characterization of layered graphs in terms of edge-colorings. 
We say that an edge-coloring of a graph is {\it nice} if for any cycle, each color appears an even number of times and in any path with at least one edge there is a color that appears odd number of times. We say that an edge coloring of a graph $G$ is {\it very nice} if  it is nice and for any two edges of the same color, any path between them that has no edges of that color has an even length. We extend a result by Havel and Moravek \cite{HM} to layered graphs. 

\begin{theorem}\label{very-nice-intro}
A graph is layered if and only if it has a very nice edge-coloring. 
\end{theorem}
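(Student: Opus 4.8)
The plan is to prove both implications by relating a very nice edge-coloring to a coordinate embedding into a single edge-layer, building on the Havel--Moravek characterization of cubical graphs via nice colorings \cite{HM}.

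For the forward direction, suppose $G$ is layered, say $G$ lies in the edge-layer between $V_{k-1}$ and $V_k$ of $Q_n$, and color each edge by the unique coordinate $i\in[n]$ in which its two endpoints differ. That this coloring is nice is exactly the Havel--Moravek embedding condition: around a cycle each coordinate is flipped an even number of times, and a nontrivial path flipping every coordinate an even number of times would have equal endpoints. To verify the extra \emph{very nice} requirement, I take two edges $e_1,e_2$ of the same color $i$ and a path $P$ between them using no color-$i$ edge. Since the $i$-th coordinate is constant along $P$, its two endpoints agree in coordinate $i$; as each of $e_1,e_2$ has exactly one endpoint containing $i$, the endpoints of $P$ either both contain $i$ or both omit it, hence lie in the same vertex-layer, i.e.\ the same part of the layer-induced bipartition, so $P$ has even length.

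For the converse I first note that niceness forbids odd cycles, so $G$ is bipartite; I fix a bipartition $(X,Y)$ and, after treating components separately and placing them on disjoint coordinate blocks, assume $G$ connected. Niceness lets me invoke Havel--Moravek to embed $G$ in a cube; writing $s_c(u)$ for the $c$-th coordinate of the image of $u$, each color-$c$ edge $e=(x,y)$ with $x\in X$, $y\in Y$ carries a \emph{sign} $\sigma(e)=s_c(y)-s_c(x)\in\{+1,-1\}$, and the embedding lands in a single layer precisely when every edge can be made \emph{ascending} ($\sigma\equiv+1$): in that case, walking from a root in $X$, the weight only ever alternates between two consecutive values, so all of $X$ has one weight and all of $Y$ the next. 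I then refine each color $c$ by declaring two color-$c$ edges equivalent when joined by a color-$c$-avoiding path between their endpoints; since all color-$c$ edges meeting a fixed component of $G-E_c$ are automatically equivalent, these classes partition both the color-$c$ edges and the components of $G-E_c$, and assigning a fresh coordinate to each class yields a cube map in which each edge flips exactly one coordinate; it is injective because an original color appearing an odd number of times on some path forces one of its refined coordinates to flip an odd number of times.

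The crux is to show that within a single such class all edges share the same sign, so that one coordinate-flip per class makes every edge ascending. This is exactly where \emph{very} niceness is used: a color-$c$-avoiding path joining endpoints of two class-equivalent edges has even length, hence joins two vertices in the \emph{same} part of $(X,Y)$; being color-$c$-avoiding it keeps $s_c$ constant, forcing the two edges' $X$-endpoints (or both their $Y$-endpoints) to receive equal $s_c$-values and therefore equal signs, while a color-$c$-avoiding path joining an $X$-endpoint of one edge to a $Y$-endpoint of another would have odd length, which very niceness forbids. Chaining along the equivalence gives a uniform sign on each class, and flipping the offending coordinates produces an all-ascending, hence layered, embedding; a final padding by constant coordinates places all images in a genuine layer $\binom{[n]}{k}$ of some $Q_n$. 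I expect the main obstacle to be the bookkeeping that the refined coordinates simultaneously form a valid injective cube embedding and respect the bipartition, with the sign-uniformity claim being the one genuinely new, very-niceness-driven step.
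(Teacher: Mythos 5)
Your proof is correct, and while the forward direction coincides with the paper's (direction coloring; a color-$c$-avoiding path keeps coordinate $c$ constant, so its endpoints lie in the same vertex layer and the path is even), your converse takes a genuinely different route. The paper (Theorem \ref{very-nice}) builds the layered embedding in one pass: it roots the map at a vertex $v$, sets $f(v)=C^-$, where $C^-$ collects the colors whose classes lie at odd distance from $v$, and proves by induction along paths --- via a parity count of the edges before the first occurrence of the last edge's color ($x_0$) and of the later occurrences of that color ($x_1$) --- that every image lands in $V_k\cup V_{k+1}$ with $k=|C^-|$; very niceness enters as the fact that consecutive same-colored edges on a path are an even distance apart. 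You instead take an arbitrary Havel--Moravek embedding and correct it a posteriori: you isolate the role of very niceness in a single sign-uniformity lemma (all edges of one color are oriented the same way relative to the bipartition $(X,Y)$, since an avoiding path between two color-$c$ edges is even, hence joins same-side endpoints, along which $s_c$ is constant), and then complement each negatively-signed coordinate --- a cube automorphism --- to make every edge ascend, which pins $X$ and $Y$ into two consecutive vertex layers. Your organization is arguably cleaner and makes the exact use of ``very'' transparent; the paper's one-pass construction yields as byproducts the explicit layer index and the extra facts recorded in Theorem \ref{very-nice} (direction colorings of layered graphs are very nice; color classes are cuts). One simplification you missed: your refinement of color classes is vacuous. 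In a connected graph, any two edges of color $c$ are joined by a path, and the segments between consecutive color-$c$ edges on it are color-$c$-avoiding, so the (transitive closure of the) equivalence you define has a single class per color; the fresh-coordinate bookkeeping and the re-verification of niceness and injectivity for the refined coloring --- which you flagged as the main obstacle --- can be dropped entirely, with sign-uniformity for each whole color class following from exactly the chaining you describe. The remaining steps (bipartiteness from niceness, per-component treatment and padding, which the paper also leaves implicit in ``we can assume it is connected'') are routine.
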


Theorem \ref{very-nice-intro}  shows in particular that graphs with no very nice coloring 
have positive Tur\'an density.   A natural question to consider is whether there are sparse cubical graphs that have positive Tur\'an density. We show that most subdivisions have zero Tur\'an density, but there are graphs of girth at least eight that have positive Tur\'an density.  Let $K_t$ and $K_{t,t}$ be complete and balanced complete bipartite graphs on $t$ and $2t$ vertices, respectively. For a graph $G$ and a positive integer $k$, let $T_k(G)$ be a $k$-subdivision of $G$, i.e., a graph obtained from $G$ by subdividing each edge with $k$ new vertices. Since an even subdivision of an odd cycle is an odd cycle, that is not cubical, we consider even subdivisions of bipartite graphs only. 

\begin{theorem}\label{subdivisions}
Let $k$ and $t$ be positive integers.
Then $T_{2k+1}(K_t)$ and $T_{2k}(K_{t,t})$ are  layered. Moreover, 
$\ex(Q_n, T_{2k+1}(K_t)) = O(n^b2^n)= o(||Q_n||)$, where $b=1-(k+1)^{-1}t^{-k}$.
If, in addition,  $k\geq 4$ is even,  then 
$\ex(Q_n, T_{2k}(K_{t,t})) = O( n^{b'}2^n) = o(||Q_n||)$, where $b'= 1 -(2t^3+4t)^{-1}k^{-t^2}$.
\end{theorem}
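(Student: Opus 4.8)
I would prove both graphs layered by exhibiting an explicit embedding into two consecutive vertex layers $V_{m-1},V_m$ of a large $Q_n$, i.e.\ into an edge layer. The parity of the subdivision length is exactly what makes this work: in $T_{2k+1}(K_t)$ each original edge becomes a path of even length $2k+2$, so all branch vertices may be placed in the \emph{same} layer, whereas in $T_{2k}(K_{t,t})$ each edge becomes a path of odd length $2k+1$, so the two sides of $K_{t,t}$ go into the two different layers. Concretely, fix a large common set $C$ to determine the base level, reserve for each branch vertex a private block of coordinates (one per incident edge) so that distinct paths leaving a branch vertex use distinct neighbours, and give each subdivided edge its own private padding coordinates $p_1,\dots,p_k$. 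Routing each path by a ``rolling'' pattern (add $p_1$, delete the previous private coordinate, add $p_2$, delete $p_1$, and so on) keeps it inside $\{V_{m-1},V_m\}$, realises any prescribed length of the correct parity once $k\ge1$, and guarantees that every internal vertex carries a path-private coordinate; hence all internal vertices of all paths are pairwise distinct and distinct from the branch vertices. This realises the subdivision inside a single edge layer, giving the first assertion. (Alternatively, one may write down a very nice edge-colouring and invoke Theorem~\ref{very-nice-intro}.)

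\textbf{Reduction for the extremal bounds.} For the upper bounds the plan is to show that a subgraph $G\subseteq Q_n$ that is too dense must already contain the subdivision as an abstract subgraph, which then automatically sits in $Q_n$. The combinatorial core is the reduction: to find $T_{2k+1}(K_t)$ it suffices to locate $t$ branch vertices that are pairwise joined by more than $M:=\binom{t}{2}(2k+1)$ \emph{internally disjoint} paths of length exactly $2k+2$. One then selects the $\binom{t}{2}$ required paths greedily, because when a path is chosen for a new pair it must avoid only the fewer than $M$ internal vertices used so far, and a pool of more than $M$ internally disjoint paths always contains an unused one. For $T_{2k}(K_{t,t})$ the same reduction applies, but only the $t^2$ pairs across the bipartition need be connected, by internally disjoint paths of the odd length $2k+1$.

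\textbf{Finding the connecting paths.} First I would pass to a subgraph of minimum degree $\delta\ge \tfrac12\,e(G)/2^n$, so $\delta$ is of order $n^{b}$ (resp.\ $n^{b'}$) under the hypothesised edge count. The subtlety, which I expect to be the crux, is that a \emph{global} path count cannot work: since the maximum degree in $Q_n$ is only $n$, the roughly $2^n\delta^\ell$ paths of a fixed length $\ell$ spread over the roughly $2^n\binom{n}{\ell}\approx 2^n n^\ell$ candidate endpoint pairs never force even a single high-multiplicity pair by naive convexity. The resolution is to exploit the product structure, splitting $[n]=X\cup Y$ and writing $Q_n=Q_X\,\square\,Q_Y$, and to count paths that are structured across the two factors; within a factor the effective host size is $2^{|X|}$ rather than $n$, which breaks the bottleneck. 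Encoding ``high-multiplicity'' pairs as edges of an auxiliary graph on $V(Q_n)$, it then remains to force the required pattern -- a $K_t$ in the clique case and a $K_{t,t}$ in the bipartite case -- whose appearance is governed by a Tur\'an, respectively K\H{o}v\'ari--S\'os--Tur\'an, type threshold. This is precisely the step that produces the two different exponents, with $(k+1)t^k$ versus $(2t^3+4t)k^{t^2}$ in the denominators.

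\textbf{Main obstacle.} The genuinely hard part is the quantitative bookkeeping that turns this outline into the stated exponents. One must control degenerate walks (the hypercube is riddled with $4$-cycles, so not every length-$\ell$ walk is a path and co-endpoint paths need not be internally disjoint), ensure that enough internal disjointness survives both the extraction from the product count and the greedy assembly, and optimise the interplay between the split sizes, the path length $\ell$, the multiplicity $M$, and the Tur\'an threshold. I expect the hypothesis $k\ge4$ for $K_{t,t}$ to enter exactly here: the paths must be long enough relative to $t$ for the structured count to beat the number of endpoint pairs and force the biclique, which is why the admissible exponent degrades to $b'=1-(2t^3+4t)^{-1}k^{-t^2}$ and is available only for sufficiently long subdivisions.
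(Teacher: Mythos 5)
Your first half is essentially sound and in the same spirit as the paper: the paper also proves layered-ness by an explicit embedding into two consecutive layers, with private coordinates per branch vertex and per subdivided edge and a ``rolling'' add/delete pattern along each subdivision path (your parity observation --- branch vertices of $K_t$ in one layer, the two sides of $K_{t,t}$ in different layers --- is exactly what the paper's construction realises). That sketch could be completed into a correct proof of the first assertion.

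The extremal-number half, however, has a genuine gap, and you have in effect flagged it yourself. The paper does not find the subdivision by counting internally disjoint paths in a dense subgraph; it observes that the explicit layered embedding can be arranged to be a \emph{partite representation} (the top-layer vertices form the edge set of a $q$-partite $q$-uniform hypergraph), and then the bound is immediate from Theorem~\ref{Conlon-rep-exponent} (Conlon's transfer to hypergraph Tur\'an numbers) combined with Theorem~\ref{Erdos} (Erd\H{o}s's bound $\ex_q(t,K^{q}(\ell_1,\dots,\ell_q))=O(t^{q-1/\delta})$, $\delta=\ell_1\cdots\ell_{q-1}$). The stated exponents are read off from the part sizes: for $T_{2k+1}(K_t)$ the representation is $(k+1)$-partite with parts $t,\dots,t,\binom{t}{2}$, so $\delta=t^{k}$ and $b=1-(k+1)^{-1}t^{-k}$; for $T_{2k}(K_{t,t})$ it is $(t^2+2)$-partite with parts of sizes at most $2t,k,\dots,k,kt^2$, so $\delta\le 2t\cdot k^{t^2}$ and $b'=1-(2t^3+4t)^{-1}k^{-t^2}$. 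Your outline replaces this with a supersaturation scheme (minimum-degree cleaning, product splitting $Q_n=Q_X\,\square\,Q_Y$, an auxiliary high-multiplicity graph, a K\H{o}v\'ari--S\'os--Tur\'an step) whose crux --- beating the convexity bottleneck you correctly identify and extracting enough internally disjoint paths of an exact length --- is left entirely unexecuted; nothing in the sketch pins down the specific exponents $b$ and $b'$, which in the paper are artifacts of the embedding's part sizes rather than of any path-counting threshold. Your diagnosis of where $k\ge 4$ enters is also off: in the paper it is a condition (together with parity of $k$) needed so that the padding coordinates $s_e^2,s_e^3,\dots$ of each subdivision path can be split into the uniform parts of a legitimate partite representation, not a requirement that paths be long enough for a structured count to force the biclique. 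So the second and third assertions of the theorem are not proved by your proposal; the missing idea is precisely the representation-based transfer.
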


\begin{theorem}\label{girth8}
There is a cubical graph of girth $8$ that is not layered. 
\end{theorem}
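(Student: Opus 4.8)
The plan is to exhibit an explicit bipartite graph $G$, realize it as a subgraph of some hypercube $Q_n$ (so that $G$ is cubical by construction), verify that its shortest cycle has length $8$, and then invoke Theorem~\ref{very-nice-intro}: it suffices to prove that $G$ admits \emph{no} very nice edge-coloring. Concretely, I would describe the vertices of $G$ as binary vectors (subsets of $[n]$) and list its edges, choosing the configuration so that $G$ is connected with bipartition $(X,Y)$, every cycle has length at least $8$, and the $8$-cycles of $G$ overlap heavily. Since layeredness is inherited by subgraphs, $G$ may contain no $C_4$ and no $C_6$, so girth exactly $8$ is consistent with $G$ being a minimal non-layered obstruction; the verification that $G$ is cubical and has girth $8$ is then a finite inspection.

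The core is to show $G$ has no very nice coloring, and the first step is to recast the layered condition in the language of cuts. A nice coloring of a connected bipartite graph is exactly a partition of $E(G)$ into cuts (cocycles) that separates all vertex pairs, each color playing the role of one hypercube coordinate. In these terms I would verify that a nice coloring is very nice precisely when every color class is a \emph{uniform} cut, meaning the bipartition $(X,Y)$ and the cut agree on all of its edges; equivalently, after suitable reflections of the coordinates the level function $|\phi(\cdot)|$ is constant on $X$ and on $Y$. Thus the task becomes: show that in \emph{every} partition of $E(G)$ into separating cuts at least one color class is non-uniform, a failure that is always witnessed by two equally colored edges joined by a color-free path of odd length.

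To control all nice colorings simultaneously, I would prove a rigidity lemma asserting that certain designated pairs of edges receive the same color in every nice coloring. The mechanism is that on a heavily overlapping family of $8$-cycles the nice-coloring constraints (each color appears an even number of times on every cycle, and some color appears an odd number of times on every path) form a system of parity conditions that forces the colors of two prescribed edges to coincide, independently of the coloring; this is the same phenomenon that pins the coloring $1,2,1,2$ of $C_4$. Having forced enough such coincidences, I would then locate, for one of these forced color classes, two of its edges joined by a color-free path of odd length, contradicting the very-nice condition; by Theorem~\ref{very-nice-intro} this establishes that $G$ is not layered.

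The main obstacle is precisely this rigidity step, and it is delicate because of a phenomenon already present for $C_8$: the $8$-cycle has a nice coloring in which antipodal edges share a color (the isometric coloring), and this coloring is \emph{not} very nice—yet $C_8$ also has the layered coloring in which equally colored edges sit at cyclic distance three, and it is this second coloring that makes $C_8$ layered. Consequently one cannot simply analyse the canonical ($\Theta$-class) coloring, observe that it fails to be very nice, and conclude; one must force the offending non-uniform cut in every cut-partition of $E(G)$, including non-isometric ones. Arranging the overlap of $8$-cycles tightly enough to eliminate this freedom, while keeping the girth equal to $8$ and the graph cubical, is where the real difficulty lies.
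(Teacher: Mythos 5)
Your proposal is a strategy outline, not a proof: the two steps that carry all the weight are left open. You never exhibit the graph $G$ (no vertex list, no embedding, no girth verification), and, more importantly, the rigidity lemma---that in \emph{every} nice coloring of $G$ certain prescribed edge pairs must share a color---is only asserted as the goal, with the final paragraph explicitly conceding that you do not know how to arrange the overlapping $8$-cycles to force it. That concession is precisely the crux: as you correctly note, a single $C_8$ admits several nice colorings (the antipodal pairing and the distance-$3$ pairing, among others), so parity constraints from a family of $8$-cycles underdetermine the coloring, and nothing in your sketch shows how to close off this freedom while keeping girth $8$ and cubicality. The auxiliary claim that ``nice $+$ every color class a uniform cut $=$ very nice'' is also only promised (``I would verify''), so even the framework you intend to work in is unproved. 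As it stands, the argument establishes nothing beyond the (correct) reduction to Theorem~\ref{very-nice-intro}.

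It is worth seeing how the paper avoids exactly the obstacle you ran into. It does \emph{not} try to control all nice colorings of the candidate graph intrinsically. Instead, for the graph $G_8$ (a theta graph with three legs of length $4$ between poles $a,a'$, plus a length-$4$ path between vertices $u,u'$ adjacent to the poles), it assumes a layered embedding exists and then exploits the \emph{ambient} hypercube: a lemma on theta graphs forces $d_H(a,a')=d_H(u,u')=2$, so one can adjoin short paths $P_a$ and $P_u$ lying inside the same two vertex layers, with midpoints $a'',u''$ outside $V(G_8)$ (else a $4$-cycle would violate girth $8$). The augmented graph $G_8\cup P_a\cup P_u$ is layered, hence its direction coloring is very nice; but it contains the $6$-cycle $u\,a\,a''\,a'\,u'\,u''\,u$, and $6$-cycles in $Q_n$ have their directions forced into the pattern $123123$, giving $c(ua)=c(u'a')$ for free. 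A length-$3$ path between these two edges avoiding that color then contradicts very-niceness. In other words, the rigidity you could not manufacture inside a girth-$8$ graph is obtained by letting the hypercube itself supply a $6$-cycle in a supergraph---a move unavailable in your purely intrinsic setup, since any $6$-cycle inside $G$ would destroy the girth condition. To repair your proposal you would either need to import this augmentation idea (together with a distance lemma pinning $d_H(a,a')$) or genuinely prove a rigidity lemma for nice colorings from $8$-cycle overlaps alone, and the latter is exactly the step you identified as unresolved.
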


 A lot of research was done on  even cycles and their extremal numbers in a hypercube. Here, a $2\ell$-cycle is  denoted $C_{2\ell}$.
 The fact that $\ex(Q_n, C_4) = \Omega(||Q_n||)$ and $\ex(Q_n, C_6) = \Omega(||Q_n||)$  was shown by Chung~\cite{chung}, Conder~\cite{conder}, and Brass {et al.}~\cite{brass}. Chung \cite{chung} showed that $\ex(Q_n, C_{4k})=o(||Q_n||)$, for any integer $k\geq 2$.
F\"uredi and \"Ozkahya~\cite{FO, FO2}   extended Chung's results by showing that  $\ex(Q_n, C_{4k+2})=o(||Q_n||)$, for any integer $k\geq 3$.
Thus $C_{2\ell}$ has zero Tur\'an density for $\ell= 4$ and $\ell\geq 6$. 
Considering more specific upper bounds for cycles with zero Tur\'an density,  Conlon \cite{C}  proved for $k \geq 2$ that
$\ex(Q_n, C_{4k}) \leq c_k  n^{-1/2 + 1/(2k)}  ||Q_n||$.   Improving on results of F\"uredi and \"Ozkahya~\cite{FO, FO2}, Axenovich \cite{MA}  showed that  for an odd integer $\ell \geq 7$, $\ex(Q_n, C_{2\ell})= O\left(n^{5/6 + 1/(3(\ell-3))} 2^n\right)$. Tomon \cite{T} independently proved a better upper bound for large $\ell$:
$\ex(Q_n, C_{2\ell})= O(n^{2/3+ \delta} 2^n), $ for some $\delta = O((\log \ell) /\ell)$.\\  
 
 It remains unknown whether $C_{10}$ has zero or positive Tur\'an density.
 While we still could not answer this question we improve on the known lower bounds of $\ex(Q_n, C_{10})$: 

\begin{theorem} \label{thm:C10}
$\ex(Q_n, C_{10}) = \Omega\left( \frac{n}{\log^a n} 2^n\right)$, where $a = \log_2 3$.
\end{theorem}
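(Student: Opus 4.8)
The plan is to construct, for every $n$, a $C_{10}$-free subgraph $G_n \subseteq Q_n$ with $\Omega(n2^n/\log^a n)$ edges by an \emph{amplification recursion}, measuring a subgraph by its edge-density relative to $||Q_n|| = n2^{n-1}$. The whole argument reduces to a single \textbf{squaring step}: from a $C_{10}$-free subgraph of $Q_m$ of density $\rho$, produce a $C_{10}$-free subgraph of $Q_{m^2}$ of density at least $\rho/3$ (and, more flexibly, of $Q_{km}$ for each $k \le m$, which is what lets us hit every intermediate dimension). Iterating the step $t$ times from a constant-dimension base graph of constant density reaches dimension $\approx 2^{2^t}$ with density $3^{-t}$. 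Choosing $t = \log_2 \log n$ yields dimension $\Theta(n)$ and density $3^{-\log_2 \log n} = (\log n)^{-\log_2 3} = \log^{-a} n$; padding the last few coordinates with a partial product costs only a constant factor, which is absorbed into the $\Omega$. Thus the exponent $a = \log_2 3$ is forced by the shape of the recursion -- factor $3$ per step, $\log_2\log n$ steps -- and everything hinges on the squaring step.

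For the squaring step I identify $Q_{m^2}$ with the Cartesian product $Q_m^{\,\square\, m}$ of $m$ copies (``blocks'') of $Q_m$, so a vertex is $(v^{(1)},\dots,v^{(m)})$ with each $v^{(j)} \in \{0,1\}^m$ and each edge changes a single block $j$ through an edge of $Q_m$. Starting from a $C_{10}$-free $G \subseteq Q_m$, I first keep the product subgraph in which a block-$j$ edge is admissible only if it is an edge of $G$; a copy of $C_{10}$ confined to a single block is already excluded, since its projection to that block would be a $C_{10}$ in $G$. I then thin the product by a \emph{mod-$3$ selection}: keep an admissible block-$j$ edge at a vertex $x$ only when a prescribed $\mathbb{Z}_3$-valued functional of $(j,x)$ takes a fixed value, arranged so that a full $1/3$ of admissible edges survive. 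Designing this functional so that it annihilates every copy of $C_{10}$ \emph{spread across at least two blocks} is the heart of the argument.

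To verify that the selection destroys all cross-block copies I will use the following structural fact. In any $C_{10}$ each direction is used an even number of times, and the edges of a fixed direction form an independent set in the cyclic sequence of $10$ edges (two consecutive edges never repeat a direction), hence a set of size at most $5$; being even and at most $5$, each direction is used exactly $2$ or $4$ times. Therefore the direction-multiplicity profile of a $C_{10}$ is one of only three types, $2{+}2{+}2{+}2{+}2$, $4{+}2{+}2{+}2$, or $4{+}4{+}2$. Projecting a cross-block copy to its blocks gives closed walks of even lengths summing to $10$ distributed according to one of these profiles; I will check, profile by profile and block-distribution by block-distribution, that the mod-$3$ functional cannot remain consistent all the way around such a closed walk, so no cross-block $C_{10}$ can close up in the thinned product.

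The main obstacle is precisely this last verification, and its subtlety is that I am \emph{not} allowed to assume $G$ has large girth: forcing girth above $10$ through the recursion would drive the density down to a polynomial in $1/n$ rather than a polylogarithm, wrecking the bound. Consequently the per-block walks of length $4,6,8$ may be genuine short cycles of $G$, and the entire burden of killing cross-block $C_{10}$'s rests on the mod-$3$ selection rather than on any sparsity of $G$. The delicate part is to exhibit one functional that simultaneously (i) obstructs all three direction-profiles in every multi-block distribution, (ii) retains a full $1/3$ of the admissible edges, and (iii) leaves enough product structure intact that the output is again usable as input to the next squaring step -- i.e.\ that the property enforced is a clean, recursively reusable invariant. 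Reconciling (i)--(iii) is the crux; once it is in place, the density bookkeeping and the choice $t = \log_2\log n$ are routine.
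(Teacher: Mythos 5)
Your proposal defers exactly the step that constitutes the entire proof: you never exhibit the $\mathbb{Z}_3$-valued functional, and the claim that some such functional kills every cross-block $C_{10}$ while retaining a third of the admissible edges is explicitly left open (``the crux; once it is in place\dots''). This is not a routine verification, and there is concrete evidence that it is not cheaply available. A cross-block $C_{10}$ with block distribution $2+2+2+2+2$ is just a generic $10$-cycle using five directions twice each, and the paper's analysis shows that a \emph{single} mod-$3$ coloring of the natural (prefix) type does admit monochromatic cycles of exactly this kind: the configuration denoted $H_4$ there can be monochromatic under one prefix coloring, which is precisely why the paper does not use a single $\mathbb{Z}_3$ coloring but the product of $|\Pi|\leq \log_2\log_2 n$ prefix colorings taken over a Spencer scrambling family of permutations, together with a layer-parity coordinate, paying $2\cdot 3^{|\Pi|}$ colors in one shot; the $H_4$ case is disposed of only via the scrambling property. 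Your recursion tries to pay one factor of $3$ per squaring step over $\log_2\log_2 n$ steps --- the same total loss, which is why your exponent $a=\log_2 3$ comes out right --- but the per-step lemma you need \emph{contains} the hard case of the paper's argument rather than circumventing it, and nothing in your outline explains why cross-block structure would let a single $\mathbb{Z}_3$ selection succeed where the natural candidate provably fails on $Q_n$ itself. Matching the numerology does not substitute for the lemma.

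There are also secondary signs that the case analysis was not actually carried out. The profile $4+4+2$ in your list is vacuous: a $10$-cycle using only three directions would lie in a $3$-dimensional subcube, which has $8<10$ vertices. Moreover, the per-block projections of a cross-block $C_{10}$ are closed \emph{walks} of even lengths $2,4,6,8$, not cycles, so your ``profile by profile and block-distribution by block-distribution'' scheme must additionally handle degenerate back-and-forth traversals and genuine short cycles of $G$ (you correctly note $G$ may have girth $4$), a taxonomy noticeably larger than the three direction profiles you enumerate. For comparison, the paper eliminates all multiplicity-$4$ profiles at the cost of a single factor of $2$: the parity coordinate $g_0$ forces any monochromatic $C_{10}$ into one edge layer, where each of five directions occurs exactly twice, after which the problem reduces to a finite check over the five layered patterns $H_1,\dots,H_5$. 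As it stands, your argument reduces the theorem to an unproved statement that is at least as hard as the paper's central case analysis, so there is a genuine gap.
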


The rest of the paper is structured as follows. We prove Theorem \ref{very-nice-intro} as an immediate corollary of Theorem \ref{very-nice} in Section \ref{colorings}.
In Section \ref{sec:subdivisions} we address subdivisions and prove Theorems \ref{complete} and \ref{complete-bipartite}, that imply Theorem \ref{subdivisions}.
Theorem \ref{girth8} is proved   in Section \ref{girth} and Theorem \ref{thm:C10} is proved   in Section \ref{sec:C10}. 
We give some density properties of layered graphs in Section \ref{density}.
Section \ref{conclusions} contains concluding remarks and open questions. In Appendix A we present an alternative proof for an upper bound on extremal numbers for graphs of zero Tur\'an density. In Appendix B we provide a symmetric layered embedding of a hypercube.\\

 After this paper was accepted for publication, two of the questions from this paper were answered. First, it was shown by Grebennikov and Marciano \cite{GM} that $C_{10}$ has positive Tur\'an density in the hypercube using a construction for daisy-free hypergraphs by  Ellis, Ivan, and Leader \cite{EIL}.  Second,   Behague, Leader,  Morrison,  and Williams \cite{BLMW}   showed that there is a cubical graph of arbitrarily high girth that is not layered.

\vskip 1cm

\section{Characterisation of layered graphs in terms of very nice colorings, proof of Theorem \ref{very-nice-intro}}\label{colorings}

	Recall that  an edge-coloring of a graph is {\it nice} if, for any cycle, each color appears an even number of times and in any path with at least one edge there is a color that appears an  odd number of times. An edge-coloring of a graph $G$ is {\it very nice} if  it is nice and, for any two edges of the same color, any path between them that has no edges of that color has an even length. 
	
	\begin{theorem}[Havel and Moravek \cite{HM}]
	A graph is cubical if and only if there is a nice edge-coloring of the graph. 
	\end{theorem}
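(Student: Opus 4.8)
The plan is to prove both implications directly from the definition of a nice coloring, exploiting the fact that the edges of $Q_n$ carry a natural \emph{direction}: identifying vertices with their binary indicator vectors, color each edge $\{A,B\}$ by the unique coordinate in which $A$ and $B$ differ. For the forward direction, suppose $G\subseteq Q_n$ and equip $G$ with this direction-coloring. Any cycle is a closed walk, so it returns to its starting vertex; hence each coordinate is flipped an even number of times, which means every color appears an even number of times on the cycle, giving the first niceness condition. Along any path with at least one edge the two endpoints are distinct vertices of $Q_n$, so they differ in at least one coordinate, and that coordinate is flipped an odd number of times along the path; thus some color appears an odd number of times, giving the second condition. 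Hence the direction-coloring is nice, so every cubical graph has a nice edge-coloring.

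For the converse I would take a connected graph $G$ carrying a nice edge-coloring with color set $C$ (the disconnected case I would dispatch with a one-line remark: embed each component separately and place the images in disjoint subcubes using fresh coordinates). Fix a root $v_0$ and define a labeling $\varphi\colon V(G)\to\{0,1\}^{C}$ by letting $\varphi(v)_c$ be the parity of the number of edges of color $c$ along a path from $v_0$ to $v$. The crux, and the main obstacle, is that $\varphi$ is \emph{well defined}. Given two $v_0$--$v$ paths $P_1,P_2$, their edge-symmetric-difference $E(P_1)\triangle E(P_2)$ has all degrees even (at every vertex, including $v_0$ and $v$, the two path-degrees sum to an even number), so it decomposes into edge-disjoint cycles. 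Since each color occurs an even number of times on each such cycle, it occurs an even number of times in the whole symmetric difference; as the color-count in the symmetric difference is congruent mod $2$ to the sum of the counts along $P_1$ and $P_2$, the two paths induce the same parity for every color. Everything here rests on converting the cycle condition into this cycle-space argument, so that is the step I expect to require the most care.

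Once $\varphi$ is well defined the remaining checks are short. If $uv$ is an edge of color $c$, then extending a $v_0$--$u$ path by $uv$ flips exactly the parity of color $c$, so $\varphi(u)$ and $\varphi(v)$ differ in precisely the coordinate $c$; hence adjacent vertices of $G$ map to adjacent vertices of $Q_{|C|}$. Injectivity is where the second niceness condition enters: if $\varphi(u)=\varphi(v)$ with $u\neq v$, then along any $u$--$v$ path every color has even parity, contradicting the requirement that some color appear an odd number of times on a path with at least one edge. Therefore $\varphi$ is an injective map sending edges to edges, i.e.\ an embedding of $G$ as a subgraph of $Q_{|C|}$, so $G$ is cubical and the proof is complete.
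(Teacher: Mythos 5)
Your proof is correct and follows essentially the same route as the paper's (the Havel--Mor\'avek argument, which the paper reproduces inside the proof of its Theorem on very nice colorings): the direction coloring of $Q_n$ gives the forward implication, and for the converse the parity labeling $\varphi$ is exactly the coordinate-flip embedding there, with well-definedness supplied by the cycle condition and injectivity by the path condition. The only detail to patch is that extending a $v_0$--$u$ path by the edge $uv$ may yield a walk rather than a path; but your symmetric-difference cycle-space lemma shows the color parities are well defined along arbitrary walks (a closed walk's edge multiset reduces mod $2$ to a disjoint union of cycles), so the adjacency check goes through verbatim --- and this is in fact slightly more careful than the paper's informal ``find a cycle in the union of two paths'' phrasing.
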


	Here, we extend this characterisation to layered graphs. Recall that the distance between two edges in a connected graph is the length of a shortest path between some endpoint of one edge and some endpoint of the other edge. Similarly, the distance between a vertex $v$  and a set of edges $S$ is the smallest distance between $v$ and an edge from $S$. For an edge of $Q_n$, let its {\it direction} be the coordinate at which its endpoints differ.  We shall also represent an edge $AB$, $A\subseteq B$ in $Q_n$ by a sequence of length $n$, 
where the $i$th position is occupied by $0$ if $i\notin B$,  by $1$ if $i\in A$ and by $\star$ if $i\in B\setminus A$.  We call this a \textit{star representation} and refer to a position occupied by a $\star$ as a {\it star position}, that in turn corresponds to the direction of the edge.  A {\it color class} in an edge-coloring of a graph is a set of all edges having the same color. The following theorem immediately implies Theorem \ref{very-nice-intro}. The following theorem contains some additional properties of very nice colorings that are of independent interest.

\begin{theorem}\label{very-nice}
A graph is layered if and only if it has a very nice edge-coloring. Moreover, if a graph is embedded in a layer and its edges are assigned colors corresponding to directions of the edges, then this coloring is very nice. In addition, any color class in a very nice coloring of a connected graph is a cut.
\end{theorem}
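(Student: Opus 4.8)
The plan is to handle the three assertions in turn, using the Havel--Moravek theorem to supply the cube embeddings and isolating all of the extra work in the very-nice condition. For the \emph{Moreover} statement (which also gives the easy direction of the equivalence), suppose $G$ sits in the layer between levels $k-1$ and $k$ of $Q_n$ and is colored by directions. First I would check niceness directly: traversing a cycle returns every coordinate to its starting value, so each direction occurs an even number of times, while a nontrivial path has distinct endpoints, which differ in some coordinate and hence use some direction an odd number of times. For the very-nice clause, observe that a path avoiding color $d$ never flips coordinate $d$, so its two endpoints agree on coordinate $d$; being confined to the two-level layer they must then lie on the \emph{same} level, and since each edge changes the level by exactly one, the path has even length.

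The cut assertion I would read off the embedding that Havel--Moravek produces. Given a nice coloring with color set $[m]$, fix a root $r$ and send each $v$ to the vector $f(v)\in\{0,1\}^m$ whose $c$-th coordinate is the parity of the number of color-$c$ edges on an $r$--$v$ path; the cycle clause of niceness makes this well defined, and the construction realizes $G$ inside $Q_m$ so that the color-$c$ edges are exactly the edges flipping coordinate $c$. Hence each color class is precisely the set of edges across the partition $\{v:f(v)_c=0\}$ versus $\{v:f(v)_c=1\}$, that is, a cut.

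The substance is the implication very nice $\Rightarrow$ layered, and here I would start from the same $f$ and try to correct it into a layer by complementing a set $S$ of coordinates, an operation that is a cube automorphism and so keeps $G$ embedded. Because a nice coloring forces every cycle to be even, $G$ is bipartite; fix the bipartition parity $\sigma\colon V(G)\to\{0,1\}$ with $\sigma(r)=0$. Along any edge the weight $|f(\cdot)|$ changes by $\pm1$, and complementing a coordinate reverses the sign contributed by that color, so the aim is to choose $S$ making the weight increase exactly when $\sigma$ increases; then $|g(v)|-\sigma(v)$ is constant on the connected graph $G$, the image occupies two consecutive levels, and $G$ is layered. To decide the orientation color by color, I would introduce $\rho(w)=\sigma(w)\oplus f(w)_c$ and note that $\rho$ is constant across each color-$c$ edge (both $\sigma$ and $f_c$ flip) but flips across every other edge.

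The main obstacle, and the sole place where the full strength of very nice enters, is to show that for each $c$ the value of $\rho$ is the \emph{same} on all color-$c$ edges, so that a single global choice of $c\in S$ or $c\notin S$ is consistent. I would argue thus: removing the color class $c$ splits $G$ (it is the cut above) into components, each on one side of the partition; inside one component $\rho$ flips at every edge, so by the very-nice condition any component-internal path joining endpoints of two color-$c$ edges has even length, forcing those endpoints, and hence the two edges, to share the same $\rho$-value. Thus color-$c$ edges sharing a side-component agree in $\rho$; contracting the components turns the color-$c$ edges into the edge set of a connected graph, whose line graph is connected, so ``sharing a component'' propagates the equality to \emph{all} color-$c$ edges. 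Taking $S=\{c:\rho\equiv 1 \text{ on the color-}c\text{ edges}\}$ then delivers the required $g$. Finally, for a disconnected $G$ I would embed each component into a layer of its own disjoint coordinate block and pad each block with a fixed set of always-$1$ coordinates to align the target level, placing the whole graph in a single layer.
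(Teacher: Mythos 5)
Your proof is correct, but it reaches the hard direction by a genuinely different route than the paper. The paper never post-processes the generic Havel--Moravek embedding: it fixes a root $v$, pre-orients every color by the parity of the distance from $v$ to its color class (the sets $C^+$ and $C^-$), starts the embedding at $f(v)=C^-$, and then shows by induction along root-paths---using the very-nice fact that consecutive same-colored edges on a path are an even distance apart---that the image stays inside $V_k\cup V_{k+1}$ with $k=|C^-|$. You instead take the parity-vector embedding as given and correct it by complementing a set $S$ of coordinates (a hypercube automorphism), reducing the whole problem to one consistency lemma: that $\rho=\sigma\oplus f_c$ takes the same value on all color-$c$ edges, proved by combining the very-nice evenness condition inside components of $G-E_c$ with connectivity of the contracted multigraph and its line graph. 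The two constructions secretly agree (your $S$ is the paper's $C^-$ in disguise), but your packaging isolates exactly where very-niceness enters, and it yields the cut assertion as a byproduct of the embedding---in the stronger form that every color class of a merely \emph{nice} coloring is precisely the edge boundary of $\{w : f(w)_c=1\}$, whereas the paper argues separately by exhibiting a cycle carrying one color exactly once. One sentence of yours needs tightening: in the ``moreover'' direction, agreement on coordinate $d$ forces the two path-endpoints onto the same level only because they are endpoints of direction-$d$ edges (for such an endpoint the value of coordinate $d$ determines which of the two levels it occupies); as literally written, two vertices of a two-level layer can agree on coordinate $d$ yet lie on different levels, so the appeal to ``being confined to the layer'' alone does not suffice. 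With that repair, and the routine injectivity bookkeeping in your disjoint-block treatment of disconnected graphs (which the paper also waves off by assuming connectivity), your argument is complete.
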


	\begin{proof}
	One direction of the proof is easy. Consider a connected graph $G$ with all edges in one layer of $Q_n$. 
	Let $c: E(G)\rightarrow [n]$ be a coloring such that $c(e)$ is equal to the direction of $e$.
	Then it is easy to see and was verified in  \cite{HM}, that $c$ is nice. 
	Consider  two edges $e$ and $e'$ of the same color and  a path between them not using that color. It is clear that the path must be of even length. \\
	
	For the other direction, consider a graph $G$ with a very nice coloring $c$. We can assume that it is connected. 
	Fix a vertex $v$ of $G$. Consider all color classes with even distance to $v$ and let $C^+$ be the set of colors on these color classes.  Let $C^-$ be the set of all other colors used on $G$. 
	We shall consider an embedding $f$  of $G$ that puts an edge in a direction corresponding to its color.  Assume that  $[n]=C^+ \cup C^-$. 
	Formally, let $f: V(G) \rightarrow V(Q_n)$ be defined as follows.
	Let $v$ be mapped to a vertex $f(v)= C^-$ in the $k$th layer, $V_k$, where $k= |C^-|$. Assume that a vertex $u$ has been mapped and $u'$ is a neighbor of $u$. We define $f(u')$ 
	to be the vertex in $Q_n$ such that $f(u)$ and $f(u')$ are adjacent and  the direction of $f(u)f(u')$ is equal to $c(uu')$. I.e., either $f(u)\setminus f(u')= c(uu')$ or $f(u') \setminus f(u)= c(uu')$.
  Let $G'$ be a graph resulted from this map, i.e., $V(G') = \{f(u): u\in V(G)\}$, $E(G') = \{f(u)f(u'): uu' \in E(G)\}$.\\

	 First of all, we have that the function $f$  is indeed an injective map into $Q_n$ preserving adjacencies exactly as  shown in  \cite{HM}. For completeness we repeat the argument here. The function $f$  is well-defined since for any $v,u$-path in $G$ and any color, the number of edges of that color has the same parity  among all such paths, since the coloring is nice.  Indeed, otherwise in the union of two paths with different parity of the number of edges of say color $j$, we would find a cycle with an odd number of edges colored $j$.   If $f(u)=f(u')$ for distinct vertices $u$ and $u'$,  consider a closed walk formed by taking a union of $f(v),f(u)$- and $f(v),f(u')$-paths in $G'$. A smallest cycle $C'$ in this walk containing $f(u)$ corresponds to a $u,u'$-path $P'$  in $G$. Let $W$ be the multiset of colors used by $c$ on $P'$.  By definition, $W$ corresponds to the multiset of directions of the edges of $C'$, so each direction in $W$ appears an even number of times.  However, the niceness of $c$ implies that  some color appears an odd number of times in $W$, a contradiction.
	So, the map is well-defined, injective, and it  clearly preserves adjacencies.\\

	Now, we shall show that $f$ maps the vertex set of $G$ into a subset of  $V_k \cup V_{k+1}$.  
	Consider an arbitrary vertex $u$ and a $v,u$-path $P$.  We claim by induction on the length of $P$ that $V(P)$ is mapped to a subset of $ V_k \cup V_{k+1}$. The basis for induction is trivial since $f(v)\in V_k$.  Let $P$ have length at least one, let $u'$ be the neighbor of $u$ in $P$,  and $P'=P-u$. Then by induction $V(P')$ is mapped onto a subset of $V_k\cup V_{k+1}$.  Let $j=c(uu')$.\\

Let $x_0$ be the number of edges between $v$ and the first edge of color $j$ on $P$, and let $x_1$ be the number of edges of color $j$ in $P'$.
Recall that  the number of edges between consecutive edges of color $j$ on any path is even. Thus we have that $||P'||$ is even if and only if $x_0 + x_1$ is even.
Furthermore, recall that $x_0$ is even if and only if $j\in C^+$, or equivalently $j\not\in f(v)$.
This implies that $j\not\in f(u')$ if and only if $x_0+x_1$ is even.
Therefore, if $x_0 + x_1$ is even, we have that $||P'||$ is even, thus $f(u')\in V_k$ and additionally $j\not\in f(u')$. Then by the rules of embedding $f(u) = f(u') \cup \{j\} \in V_{k+1}$. 
If $x_0+x_1$ is odd, then $||P'||$ is odd, so $f(u')\in V_{k+1}$, and $j\in f(u')$. Then by the rules of embedding $f(u) = f(u') -\{j\} \in V_{k}$.\\

To see that any color class in a very nice coloring of a connected graph $G$ is a cut, assume the opposite, i.e., assume that removing the edges of some color, $i$,  results in a connected graph $G'$. Then, the endpoints of some edge $e$ of color $i$ are connected by a path in $G'$. This path, together with the edge $e$ is a cycle with color $i$ represented on exactly one edge, thus contradicting the fact that the coloring is nice.
	\end{proof}

\section{Subdivisions - layered embeddings and extremal numbers, proof of Theorem \ref{subdivisions}}\label{sec:subdivisions}

We shall need some preliminary definitions and known results to prove Theorem \ref{subdivisions}.
\subsection{Partite representations, extremal numbers for hypergraphs,  and extremal numbers in a hypercube}

	We say that a subgraph $H$ of $Q_n$ has  a {\it $k$-partite representation}  $\cH$  if $H$ is isomorphic to a graph $H'$ with a vertex set contained in  $\binom{[n]}{k}\cup \binom{[n]}{k-1}$ such that   $V(H') \cap \binom{[n]}{k}$ is an edge-set of a $k$-partite $k$-uniform hypergraph.  We say that a graph has a {\it partite representation} if it has a $k$-partite representation for some $k$.
	Moreover, we call the map that brings $V(H)$ to  $V(H')$, a {\it $k$-partite embedding of $H$}.
	For example, if $H$ is an $8$-cycle, it has a $2$-partite representation with edges 
$12, 23, 34, 14$ corresponding to an $8$-cycle  with vertices $1, 12, 2, 23, 3, 34, 4, 14, 1$, in order. 
For a $k$-uniform hypergraph $H$, $\ex_k(t, H)$ denotes the largest number of edges in a $k$-uniform $t$-vertex hypergraph with no subgraph isomorphic to $H$.

\begin{theorem} [Conlon \cite{C}]\label{Conlon-rep-exponent}
Let $H$ be a cubical graph with $k$-partite representation $\cH$, for a fixed $k$.  If $\ex_k(t, \cH) \leq \alpha t^k$, then $\ex(Q_n, H) = O(\alpha^{1/k}n2^n)$. 
\end{theorem}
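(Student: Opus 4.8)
The plan is to translate the extremal problem in $Q_n$ into a Tur\'an problem for the $k$-uniform hypergraph $\mathcal{H}$ by means of a family of auxiliary ``link'' hypergraphs. Assign every edge of $Q_n$ to its upper endpoint, so that for a vertex $v$ we may speak of its \emph{down-degree} $d^-(v)$, the number of edges of $G$ joining $v$ to a neighbour below it; in particular $\sum_v d^-(v)=||G||$. For each vertex $w\in V(Q_n)$ I define a $k$-uniform hypergraph $\mathcal{G}_w$ on ground set $[n]\setminus w$, declaring a $k$-set $A\subseteq[n]\setminus w$ to be a hyperedge exactly when the top vertex $w\cup A$ carries its full ``$A$-fan'' in $G$, that is, $\{w\cup(A\setminus\{a\}),\,w\cup A\}\in E(G)$ for every $a\in A$. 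Thus $\mathcal{G}_w$ records, anchored at the base $w$, which $k$-dimensional corners are completely realised by $G$.

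The conceptual heart is the claim that if $\mathcal{G}_w$ contains a copy of $\mathcal{H}$ for some $w$, then $G$ contains a copy of $H$. Given a sub-hypergraph copy of $\mathcal{H}$ in $\mathcal{G}_w$, i.e.\ an injection $\phi$ of the ground set of $\mathcal{H}$ into $[n]\setminus w$ sending each hyperedge $e$ to a hyperedge $A_e=\phi(e)$ of $\mathcal{G}_w$, I build an embedding of the representation $H'$: send each upper vertex $e$ to $w\cup A_e$ and each lower vertex $T$ (a $(k-1)$-subset of some hyperedge) to $w\cup\phi(T)$. Every edge of $H'$ is a containment $T\subset e$ with $\phi(T)=A_e\setminus\{a\}$ for some $a$; since $A_e$ is a hyperedge of $\mathcal{G}_w$ its full $A_e$-fan lies in $G$, so $\{w\cup\phi(T),\,w\cup A_e\}\in E(G)$. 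Injectivity is inherited from $\phi$, the identifications of lower vertices shared by several upper vertices match those in $H'$, and so this is a genuine copy of $H'\cong H$ in $G$.

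With the claim in hand, suppose $G$ is $H$-free. Then each $\mathcal{G}_w$ is $\mathcal{H}$-free on at most $n$ vertices, so $|\mathcal{G}_w|\le \ex_k(n,\mathcal{H})\le \alpha n^k$. Summing over the $2^n$ choices of $w$ and using the bijection $(w,A)\leftrightarrow v=w\cup A$ — each hyperedge of $\mathcal{G}_w$ being a $k$-subset of the present down-directions at its top vertex $v$ — gives the counting identity
\[
\sum_{v\in V(Q_n)}\binom{d^-(v)}{k} \;=\; \sum_{w}|\mathcal{G}_w| \;\le\; \alpha n^k 2^n .
\]
On the other hand, convexity of the sequence $\binom{\cdot}{k}$ (discrete Jensen applied to the down-degrees) yields $\sum_v\binom{d^-(v)}{k}\ge 2^n\binom{\bar d}{k}$, where $\bar d=||G||/2^n$. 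Comparing the two bounds gives $\binom{\bar d}{k}\le\alpha n^k$, and since $\binom{\bar d}{k}\ge(\bar d/(2k))^k$ once $\bar d\ge 2k$, I obtain $\bar d=O(\alpha^{1/k}n)$ and hence $||G||=\bar d\,2^n=O(\alpha^{1/k}n2^n)$; the residual case $\bar d<2k$ gives the even smaller bound $||G||<2k\,2^n$. This averaging is precisely what produces the exponent $1/k$ on $\alpha$, and the implied constant depends only on the fixed $k$.

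The step I expect to require the most care is the realisation claim of the second paragraph: one must verify that full $k$-fans at the top vertices recover \emph{every} edge of the specific representation $H'$, and that distinct vertices of $H'$ (in particular lower vertices shared by several upper vertices) receive distinct and mutually consistent images, after reducing to the case that $H$ has no isolated vertices. It is worth emphasising that the $k$-partite hypothesis on $\mathcal{H}$ is not used in this reduction at all; its only role is to guarantee that $\ex_k(t,\mathcal{H})$ is small, i.e.\ that $\alpha$ may be taken small. The remaining technical point is the convexity estimate, where one should dispose of low-degree vertices so that discrete Jensen applies cleanly.
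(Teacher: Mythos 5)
Your proposal is correct, and it is essentially the original argument of Conlon \cite{C} (the paper under review only cites this theorem without reproducing a proof): the full-fan link hypergraphs $\mathcal{G}_w$, the realisation claim lifting a copy of $\mathcal{H}$ in some $\mathcal{G}_w$ to a copy of the representation $H'$ in $G$, the identity $\sum_w |\mathcal{G}_w| = \sum_{v}\binom{d^-(v)}{k}$, and the convexity/Jensen step with the low-degree case disposed of separately are exactly the steps of that proof. Your side remark is also accurate: the $k$-partiteness of $\mathcal{H}$ plays no role in the reduction itself and is only used (via Erd\H{o}s's theorem) to guarantee that $\alpha$ can be taken small.
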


\begin{theorem}[Erd\H{o}s, \cite{E64}] \label{Erdos}
Let $k\geq 2$ be an integer and $K^{k}(\ell_1, \ldots, \ell_k)$ be the complete $k$-partite $k$-uniform hypergraph with parts of sizes $\ell_1, \ldots, \ell_k$.
Then $\ex_{k}(t, K^{k}(\ell_1, \ldots, \ell_{k})) = O(t^{k - 1/\delta})$, 
where  $\delta = \ell_1 \cdots \ell_{k-1}$. 
\end{theorem}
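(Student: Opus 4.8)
The plan is to establish Theorem~\ref{Erdos}, the Erd\H{o}s box theorem, by induction on the uniformity $k$, running a double-counting argument whose two sides are a \emph{forbidden-configuration} upper bound and a \emph{supersaturation} lower bound obtained by iterated convexity. Throughout set $\delta=\ell_1\cdots\ell_{k-1}$ and $L=\ell_1+\cdots+\ell_{k-1}$. A useful first move is to reduce to the $k$-partite case: partitioning the $t$ vertices uniformly at random into classes $V_1,\dots,V_k$ keeps, in expectation, a $k!/k^k$ fraction of the edges as transversal edges (one endpoint per class), so up to a constant factor it suffices to bound the number of edges $m$ of a $k$-partite hypergraph $H\subseteq V_1\times\cdots\times V_k$ with each $|V_i|=n\asymp t$, where a copy of $K^{k}(\ell_1,\dots,\ell_k)$ means sets $A_i\subseteq V_i$ with $|A_i|=\ell_i$ and $A_1\times\cdots\times A_k\subseteq H$.

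The base case $k=2$ is the K\H{o}v\'ari--S\'os--Tur\'an bound. I would count cherries $(S,v)$ with $S\in\binom{V_1}{\ell_1}$, $v\in V_2$, and $S\subseteq N(v)$: since $H$ has no $K^{2}(\ell_1,\ell_2)$, each $S$ lies below at most $\ell_2-1$ vertices, so this count is at most $(\ell_2-1)\binom{n}{\ell_1}=O(n^{\ell_1})$, while $\sum_{v}\binom{\deg v}{\ell_1}\ge n\binom{m/n}{\ell_1}=\Omega(m^{\ell_1}/n^{\ell_1-1})$ by Jensen's inequality; comparing gives $m=O(n^{2-1/\ell_1})$. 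Crucially, I would carry a \emph{stronger} induction hypothesis than the bare extremal bound, namely the supersaturation statement that any $j$-partite $j$-uniform hypergraph with $d$ edges on parts of size $n$ contains $\Omega\!\big(d^{\,\delta_j}/n^{\,j\delta_j-L_j}\big)$ copies of $K^{j}(\ell_1,\dots,\ell_j)$ (with $\delta_j,L_j$ the analogous product and sum), since only this counting form survives being fed into the next dimension.

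For the inductive step from $k-1$ to $k$, I would, for each $y\in V_k$, pass to its link $L_y\subseteq V_1\times\cdots\times V_{k-1}$, a $(k-1)$-partite $(k-1)$-uniform hypergraph with $\deg y$ edges, and count completed boxes
\[
P=\sum_{y\in V_k}\#\big\{\text{copies of }K^{k-1}(\ell_1,\dots,\ell_{k-1})\text{ in }L_y\big\}.
\]
The upper bound comes from the forbidden configuration: a fixed box over the first $k-1$ parts is completed by at most $\ell_k-1$ vertices $y$ (else we recover $K^{k}(\ell_1,\dots,\ell_k)$, after discarding the at most $L$ vertices $y$ that meet the box, which is harmless for $n$ large), and there are at most $\prod_{i<k}\binom{n}{\ell_i}=O(n^{L})$ boxes, so $P=O(n^{L})$. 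The lower bound comes from the induction hypothesis applied to each $L_y$ followed by one more application of Jensen across the $n$ vertices of $V_k$, giving $P=\Omega\big(m^{\delta}/n^{\,k\delta-1-L}\big)$. Comparing the two bounds, the factors $n^{L}$ cancel and one obtains $m^{\delta}=O(n^{k\delta-1})$, i.e.\ $m=O(n^{k-1/\delta})=O(t^{k-1/\delta})$, as claimed; a final round of convexity, counting how many of the $\ge\ell_k$ completions each box carries, upgrades this to the dimension-$k$ supersaturation statement needed to continue the induction.

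I expect the main obstacle to be the bookkeeping in the supersaturation lower bound: one must apply Jensen's inequality once per coordinate, and once more across the links, tracking the two exponents $\delta_j$ and $L_j$ so that they telescope correctly, and one must check that Jensen is invoked only in the regime where it is not vacuous (that is, that $d$ lies above the relevant threshold $\Omega(n^{\,j-1/\delta_j})$, below which the claimed bound holds trivially and the induction can simply stop). The partite reduction and the disjointness of the parts $A_i$ are routine but should be stated with care, since it is precisely the ``at most $\ell_k-1$ completions'' step that simultaneously uses the forbidden configuration and the largeness of $n$.
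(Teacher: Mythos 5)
The paper never proves this statement: it is imported as a black box from Erd\H{o}s \cite{E64} and only ever applied (in Theorems \ref{complete} and \ref{complete-bipartite}), so there is no internal proof to compare against, and I assess your argument on its own terms. It is correct, and the exponent bookkeeping checks out: comparing $P=O(n^{L})$ (from ``each $(k-1)$-box has at most $\ell_k-1$ completions'', which after your partite reduction needs no discarding at all, since $V_k$ is disjoint from the box) with $P=\Omega\bigl(m^{\delta}/n^{k\delta-1-L}\bigr)$ makes the $n^{L}$ factors cancel and yields $m=O(n^{k-1/\delta})$; this is also exactly where the asymmetry of the theorem comes from, since $\ell_k$ enters only through the multiplicity bound and never through an exponent, which is why $\delta=\ell_1\cdots\ell_{k-1}$ omits $\ell_k$. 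Your upgrade step telescopes as claimed: $\sum_B\binom{c(B)}{\ell_k}\gtrsim n^{L}\bigl(P/n^{L}\bigr)^{\ell_k}=m^{\delta\ell_k}/n^{k\delta\ell_k-\ell_k-L}$, which is the level-$k$ supersaturation form with $\delta_k=\delta\ell_k$ and $L_k=L+\ell_k$. The one genuine technical point is the one you flag yourself: the clean power-form supersaturation is false below the extremal threshold, so Jensen taken over all $y\in V_k$ is not legitimate verbatim, because light links may contain no boxes at all. The standard repair is either to discard all $y$ with $\deg y<m/(2n)$ (keeping at least $m/2$ edges, and noting that $m\geq Cn^{k-1/\delta}$ forces the surviving links above the level-$(k-1)$ threshold, since $1/\delta\leq 1/(\ell_1\cdots\ell_{k-2})$), or to run the whole induction with the convex extension of $\binom{x}{\ell}$ (set to $0$ for $x<\ell-1$), carrying nested binomials instead of powers and only converting to powers at the end; either way your induction closes. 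It is worth knowing that Erd\H{o}s's original argument is lighter than your supersaturation scheme: for each $\ell_1$-set $A_1\subseteq V_1$ one considers the \emph{common} link, and convexity produces a single $A_1$ whose common link has $\gtrsim n^{k-1}(m/n^{k})^{\ell_1}$ edges; since $\ell_1/\delta=1/(\ell_2\cdots\ell_{k-1})$, this sits exactly at the level-$(k-1)$ extremal threshold for $K^{k-1}(\ell_2,\ldots,\ell_k)$, so a plain induction on $k$ closes with no copy-counting hypothesis at all. Your route costs this extra bookkeeping but buys the supersaturation (copy-counting) statement itself, which is strictly more than the theorem requires here.
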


Theorem \ref{Erdos}
implies in particular, that  $\ex_k(t, \cH) \leq \alpha t^k$  for  $\alpha < t^{-a}$, for some positive $a$. Therefore, one can conclude the following fact about the Tur\'an density of graphs having partite representation.

\begin{corollary} 
If $H$ is a cubical graph  that has a  partite representation then $\ex(Q_n, H)= o(||Q_n||)$.
\end{corollary}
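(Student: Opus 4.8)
The plan is to combine the two black-box results stated above, Conlon's transference theorem and Erd\H{o}s's bound for complete multipartite hypergraphs, using only the elementary observation that the hypergraph $\cH$ witnessing the partite representation of $H$ embeds into a complete multipartite hypergraph.

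First I would fix the $k$ for which $H$ has a $k$-partite representation $\cH$; this $k$ is a constant, which is exactly what Theorem \ref{Conlon-rep-exponent} requires. I may assume $k \geq 2$, since a $1$-partite representation forces $H$ to be a star, which trivially has zero Tur\'an density. By definition $\cH$ is a $k$-partite $k$-uniform hypergraph, so, letting $\ell_i$ denote the size of its $i$th part, $\cH$ is a subhypergraph of the complete $k$-partite $k$-uniform hypergraph $K := K^{k}(\ell_1, \ldots, \ell_k)$.

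Next I would record the monotonicity of extremal numbers under this containment. If a $k$-uniform hypergraph $F$ contains a copy of $K$, then, since $\cH \subseteq K$, it also contains a copy of $\cH$; contrapositively every $\cH$-free hypergraph is $K$-free, whence $\ex_k(t, \cH) \leq \ex_k(t, K)$. By Theorem \ref{Erdos} we have $\ex_k(t, K) = O(t^{k - 1/\delta})$ with $\delta = \ell_1 \cdots \ell_{k-1}$, so $\ex_k(t, \cH) \leq \alpha\, t^k$ with $\alpha < t^{-a}$ for some positive constant $a$ (one may take $a$ slightly below $1/\delta$ to absorb the implicit constant), exactly as in the remark preceding the statement. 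Feeding this into Theorem \ref{Conlon-rep-exponent} at the relevant scale $t = n$ yields $\ex(Q_n, H) = O(\alpha^{1/k} n 2^n) = O(n^{\,1 - a/k}\, 2^n)$. Since $1 - a/k < 1$ and $||Q_n|| = n 2^{n-1} = \Theta(n 2^n)$, this gives $\ex(Q_n, H) = o(||Q_n||)$, as desired.

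The deduction is short once the two cited theorems are available, so there is no serious obstacle here; the only two points that require genuine care are getting the direction of the Tur\'an-number inequality right (an $\cH$-free hypergraph need only be $K$-free, and not conversely, so the bound flows from $\cH$ up to $K$), and confirming that the polynomial saving $t^{-a}$ survives the $k$th root in Conlon's theorem, leaving an exponent on $n$ strictly below $1$ — which is precisely what distinguishes zero from positive Tur\'an density.
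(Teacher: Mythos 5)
Your proposal is correct and follows essentially the same route as the paper, whose justification for the corollary is exactly the remark preceding it: the $k$-partite representation $\cH$ sits inside the complete $k$-partite $k$-uniform hypergraph, so Erd\H{o}s's bound gives $\ex_k(t,\cH)\leq \alpha t^k$ with $\alpha$ a negative power of $t$, and Conlon's theorem then yields $\ex(Q_n,H)=O(n^{1-a/k}2^n)=o(||Q_n||)$. Your additional care about monotonicity of the extremal number and the degenerate $k=1$ case only makes explicit what the paper leaves implicit.
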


Note that having a partite representation is not a characterisation for graphs $H$ with $\ex(Q_n, H)= o(||Q_n||)$ as shown by the first author in \cite{MA1}.
For more recent results on such extremal hypergraph numbers, see Ma, Yuan, and  Zhang  \cite{MYZ}, as well as Mubayi and Verstra\"ete \cite{MV}.

\subsection{Subdivisions of cliques and bi-cliques}

For a graph $G$, we say that a graph $H$ is a $k$-{\it subdivision} of $G$ and denote it  $T_k(G)$ if $H$ is obtained from $G$ by ``inserting" $k$ vertices in each edge of $G$. Formally, 
$V(H)= V(G)\cup \bigcup_{e\in E(G)} V_e$, where $V(G)$ and $V_e$'s are pairwise disjoint, $|V_e|=k$ for each $e\in E(G)$,  and such that $G$ is a union of paths $P_e$ for $e\in E(G)$, where $P_e$ is a path on vertex set $\{x, y\}\cup V_e$ with endpoints $x$ and $y$, for $e=xy$. We shall call vertices from $V(G)$ {\it branch vertices},
paths $P_e$ {\it subdivision paths}, and vertices in $\bigcup_{e\in E(G)} V_e$ {\it subdivision vertices}. If $k$ is odd, we say that $T_k(G)$ is an {\it odd subdivision} of $G$, if 
$k$ is even, we say that $T_k(G)$ is an {\it even subdivision} of $G$.
Marquardt \cite{M} showed that $T_k(Q_n)$ has a partite representation for any odd $k$ and any $n$. Here we prove a more general result about an odd subdivision of any graph.

\begin{theorem}\label{complete}
For any integer $k\geq 0$ and any positive integer $t$, $T_{2k+1}(K_t)$ is  layered. Moreover,  for $k\geq 1$, 
$\ex(Q_n, T_{2k+1}(K_t)) = O(n^b2^n)= o(||Q_n||)$, where $b=1-\frac{1}{(k+1)t^k}$.
\end{theorem}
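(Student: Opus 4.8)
The plan is to prove both assertions with a single explicit construction: I exhibit an embedding of $T_{2k+1}(K_t)$ into two consecutive layers of a hypercube which is simultaneously a $(k+1)$-partite representation in the sense of Section~\ref{sec:subdivisions}. Layered-ness is then immediate, and the extremal bound follows by feeding the representing hypergraph into Theorem~\ref{Erdos} and then into Conlon's reduction, Theorem~\ref{Conlon-rep-exponent}.

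For the construction (assume first $k\ge 1$), I would partition the coordinate set into $k+1$ parts $X_1,\dots,X_{k+1}$, where $X_r\cong[t]$ for $1\le r\le k$ and $X_{k+1}\cong E(K_t)$; write $(r,i)$ for the coordinate labelled $i$ in part $X_r$ and $(k+1,e)$ for the coordinate labelled $e$ in $X_{k+1}$. Map each branch vertex $i\in[t]$ to the $k$-set $B_i=\{(1,i),\dots,(k,i)\}$, a transversal of the first $k$ parts. For an edge $e=\{i,j\}$ with $i<j$, route the subdivision path $P_e$ as the staircase
\begin{equation*}
B_i \;\to\; T_0 \;\to\; b_1 \;\to\; T_1 \;\to\; \cdots \;\to\; b_k \;\to\; T_k \;\to\; B_j,
\end{equation*}
where $T_s$ is the full transversal that contains $(k+1,e)$ and, among the first $k$ parts, carries the label $j$ in parts $X_1,\dots,X_s$ and the label $i$ in parts $X_{s+1},\dots,X_k$; the bottom vertex is $b_s=T_{s-1}\setminus\{(s,i)\}=T_s\setminus\{(s,j)\}$. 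Thus the first and last edges of $P_e$ toggle the private edge-coordinate $(k+1,e)$, while the $2k$ middle edges switch, in turn, the label in parts $X_1,\dots,X_k$ from $i$ to $j$. I then check the three routine points: every edge of $P_e$ changes one coordinate and moves between layers $k$ and $k+1$; the $T_s,b_s$ of distinct edges are distinct because they all carry $(k+1,e)$, while the $B_i$ are shared exactly as the branch vertices demand; hence the map is injective and adjacency-preserving into $V_k\cup V_{k+1}$, so $T_{2k+1}(K_t)$ is layered. (For $k=0$ the staircase degenerates; here one embeds directly via $i\mapsto\{i\}$ and the lone subdivision vertex of $\{i,j\}$ to $\{i,j\}$, which lies in $V_1\cup V_2$.)

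For the extremal bound (with $k\ge 1$), note that, by design, the image vertices in the larger layer $V_{k+1}$ are exactly the transversals $T_s$, so they form the edge set of a $(k+1)$-uniform, $(k+1)$-partite hypergraph $\mathcal H$ with parts $X_1,\dots,X_{k+1}$, i.e.\ a $(k+1)$-partite representation of $T_{2k+1}(K_t)$. Since $\mathcal H\subseteq K^{k+1}(t,\dots,t,\binom{t}{2})$ (the first $k$ parts have size $t$, the last size $\binom{t}{2}$), every $\mathcal H$-free hypergraph is $K^{k+1}(t,\dots,t,\binom{t}{2})$-free, so Theorem~\ref{Erdos}, with the $\delta$-product taken over the $k$ small parts, gives $\ex_{k+1}(m,\mathcal H)=O\big(m^{(k+1)-1/t^{k}}\big)$ with $\delta=t^{k}$. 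Plugging this into Theorem~\ref{Conlon-rep-exponent} with uniformity $k+1$ and host size $\Theta(n)$, the saving $m^{-1/t^k}$ contributes $\alpha^{1/(k+1)}=O(n^{-1/((k+1)t^{k})})$, yielding $\ex(Q_n,T_{2k+1}(K_t))=O(n^{b}2^{n})$ with $b=1-\frac{1}{(k+1)t^{k}}$; as $b<1$ this is $o(||Q_n||)$.

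The main obstacle is meeting two constraints on the embedding at once: the branch vertices must be placed independently of the incident edge, which forces the edge-coordinate $(k+1,e)$ to be the one toggled at both ends of $P_e$, while the peaks $T_s$ must remain transversals of a \emph{fixed} $(k+1)$-partition whose part sizes are exactly right to produce $\delta=t^k$ — it is this matching that yields the precise exponent. I would also flag the degenerate case $k=0$: there the first $k$ parts vanish, the construction cannot distinguish branch vertices, and indeed for $t\ge 3$ the graph $T_{1}(K_t)$ contains $C_6$, so the extremal bound genuinely needs $k\ge1$, whereas the layered claim holds for all $k\ge0$.
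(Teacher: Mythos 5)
Your proposal is correct and is essentially the paper's own proof in different notation: your parts $X_1,\dots,X_k,X_{k+1}$ are its coordinate blocks $A_x$ (split by index) and $\{b_e\colon e\in E(K_t)\}$, your staircase $B_i\to T_0\to b_1\to T_1\to\cdots\to b_k\to T_k\to B_j$ coincides vertex-for-vertex with its embedding $f(z_1),\dots,f(z_{2k+1})$ of the subdivision path, and the extremal bound is obtained identically by viewing the top-layer vertices as a $(k+1)$-partite $(k+1)$-uniform hypergraph inside $K^{k+1}(t,\dots,t,\binom{t}{2})$ and applying Theorem \ref{Erdos} with $\delta=t^k$ followed by Theorem \ref{Conlon-rep-exponent}. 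Your closing caveat is also a fair catch that the paper's statement glosses over: for $k=0$ the claimed exponent is $b=0$, yet $T_1(K_t)$ contains $C_6$ for $t\geq 3$ and hence has positive Tur\'an density, so the ``Moreover'' clause genuinely requires $k\geq 1$ even though the layeredness claim holds for all $k\geq 0$.
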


\begin{proof}
Let $G= T_{2k+1}(K_t)$. We shall be constructing an embedding of $G$ in $Q_n$, where the ground set $[n]$ is partitioned as follows:
 $$[n] =\bigcup_{x\in V(K_t)} A_x \cup \bigcup_{e\in E(K_t)} B_e,$$
where $B_e$'s and $A_x$'s are pairwise disjoint,  for each $x\in V(K_t)$ and $e\in E(K_t)$. 
For $k=0$, let    $A_x=\{x_1\}$ and  $B_e= \emptyset$, $e\in E(K_t)$.   For $k\geq 1$, let   $A_x=\{x_1, \ldots, x_k\}$  and  $B_e=\{b_e\}$, $e\in E(K_t)$.  So,  $n= tk+\binom{t}{2}$, for $k\geq 1$ and $n=t$ for $k=0$. \\

We shall define an embedding $f$ of $V(G)$ into $V(Q_n)$. 
Recall that  $V(Q_n)$ is the set of subsets of $[n]$.  If $x\in V(K_t)$, we also denote the respective branch vertex of $G$ by $x$.  Let $f(x) = A_x$. \\
 
 Now, consider two vertices $x, y\in V(K_t)$ forming an edge $e$. 
 Let the $xy$-subdividing path be $x, z_1, \ldots, z_{2k+1}, y$. \\
 
 If $k=0$, then $f(x) =\{x_1\}$, for any $x\in V(K_t)$,  let $f(z_1) = \{x_1, y_1\}$. \\

 If $k\geq 1$,  let $f(z_1) = \{b_e\} \cup f(x)$, $f(z_{2k+1})=  \{b_e\} \cup f(y)$. 
 For $1\leq i \leq k-1$, let   $f(z_{2i+1}) =  f(z_{2i-1}) - \{x_i\}\cup \{y_i\}$. 
 For $1\leq i\leq k$,  let  $f(z_{2i}) = f(z_{2i-1})- \{x_i\}$. The embedding $f$ is illustrated in Table \ref{Kn}.\\
 
\begin{table}[h]
\centering
\begin{tabular} {l||r|r|r||r|rr|rr||r|rrr|rrr} 
& \multicolumn{3}{c||}{$k=1$} & \multicolumn{5}{c||}{$k=2$} & \multicolumn{7}{c}{$k=3$}\\
&$~~~~b_e$& $x_1$& $y_1$	 							&$~~~~b_e$& $x_1$& $x_2$& $y_1$& $y_2$				&$~~~~b_e$& $x_1$& $x_2$& $x_3$& $y_1$&$y_2$&$y_3$\\
\hline \hline
$f(x)$& 	0&1&0												&0&1&1&0&0										&0&1&1&1&0&0&0\\
&&&&&&&&&&&&&&&\\
$f(z_1)$& 1&1&0												&1&1&1&0&0										&1&1&1&1&0&0 &0\\
$f(z_2)$& 1&0&0												&1&0&1&0&0										&1&0&1&1&0&0&0\\
$f(z_3)$& 1&0&1												&1&0&1&1&0										&1&0&1&1&1&0&0\\
$f(z_4)$& &&													&1&0&0&1&0										&1&0&0&1&1&0&0\\
$f(z_5)$& &&													&1&0&0&1&1										&1&0&0&1&1&1&0\\
$f(z_6)$& &&													&&&&&											&1&0&0&0&1&1&0\\
$f(z_7)$& &&													&&&&&											&1&0&0&0&1&1&1\\
&&&&&&&&&&&&&&&\\
$f(y)$& 	0&0&1												&0&0&0&1&1										&0&0&0&0&1&1&1
\end{tabular} 

\caption{Indicator vectors for $f(x), f(z_1), \ldots, f(z_{2k+1}), f(y)$, respectively, restricted to $B_e\cup A_{x}\cup A_y$, for $k=1, 2, $ and $3$.\label{Kn}}
\end{table}

This embedding is injective since distinct branch vertices are clearly mapped into distinct vertices of $Q_n$ not containing $b_e$ for any $e\in E(K_t)$.
On the other hand, any vertex subdividing an edge $e$ is mappped into one containing $b_e$, and not containing $b_{e'}$ for any $e'\in E(K_t)$, $e'\neq e$.
Thus a vertex subdividing $e$ and a vertex subdividing $e'$ for $e\neq e'$ are mapped into distinct vertices.\\

Finally, we see that the embedding is $(k+1)$-partite with parts $\{ x_1: ~ x\in V(K_t)\},$  $ \{x_2: ~ x\in V(K_t)\}$, $\ldots, $
$\{ x_k: ~ x\in V(K_t)\},$ and $\{b_e:~  e\in E(K_t)\}$, of sizes $t, t, \ldots, t, $ and $\binom{t}{2}$, respectively. By  Theorem \ref{Erdos} with $k+1$ instead of $k$, $\ell_1= \cdots = \ell_k = t$,  and  $\delta = \ell_1 \cdots \ell_{k} = t^k$,   $$\ex_{k+1}(n, K^{(k+1)}(\ell_1, \ldots, \ell_{k+1})) = O(n^{(k+1) - 1/\delta}) = O(\alpha n^{k+1}),$$ 
where  $\alpha =  n^{-t^{-k}}.$ 
Thus by Theorem \ref{Conlon-rep-exponent}, we have that  $$\ex(Q_n, G) =  O\left(\alpha^{\frac{1}{k+1}} n2^n\right)=  O\left(n^{-\frac{{t^{-k}}}{k+1}} n2^n\right) = o(||Q_n||).$$
\end{proof}

Next we consider even subdivisions. Since an even subdivision of an odd cycle is an odd cycle, that is not cubical, we only restrict ourselves to even subdivisions of bipartite graphs.  We shall consider even subdivisions of complete bipartite graphs. Note that  it is easy to see that $G=T_{2k}(K_{t,t})$ is cubical:  We shall consider an  embedding $f$ of  $G$ into $Q_n= Q_{2t+ 2k-1}$.  Let the parts of $G$ be ordered sets $A$ and $B$.  
Let $[n]= X\cup Y\cup Q$, where $X, Y, Q$ are pairwise disjoint sets, 
$X=\{x_1, \ldots, x_t\}$,  
$Y=\{y_1, \ldots, y_{2k-1}\}$, 
 $Q=\{q_1, \ldots, q_t\}$,  $|X|=|Q|=t$,  $|Y|=2k-1$.  

If $a\in A$ is  the  $i$th vertex from $A$, let $f(a) = X-\{x_i\}$. If $b\in B$ is the $j$th vertex from $B$, let $f(b) = \{q_j\} \cup Y\cup X$.
For the $i$th vertex of $A$, $a$,  and  for the $j$th vertex of $B$, $b$, let the $ab$-subdivision path be $a, z_1, z_2, \ldots, z_{2k}, b$, where $z_i=z_i(a,b)$, $i=1,\ldots, 2k$. 
Furthermore, let $f(z_1)= f(a) \cup \{q_j\}$,  $f(z_\ell) = f(z_{\ell-1})\cup \{y_{\ell -1}\}$, $\ell = 2, \ldots 2k$.  Note that $f(z_{2k}) = Y\cup \{q_j\} \cup X-\{x_i\}$.   The following theorem proves that $G$ is layered, which in particular implies that $G$ is cubical. However, the embedding presented in the theorem is a bit more involved. \\

\begin{theorem}\label{complete-bipartite}
For any positive integers $k$ and $t$, $T_{2k}(K_{t,t})$  is layered.
Moreover, for any even integer $k\geq 4$, and any positive integer $t$, 
$\ex(Q_n, T_{2k}(K_{t,t})) = O( n^{b}2^n) = o(||Q_n||)$, where $b= 1 -\frac{1}{2t(t^2+2)k^{t^2}}$.
\end{theorem}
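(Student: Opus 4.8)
The plan is to derive both statements from one explicit embedding of $G=T_{2k}(K_{t,t})$ into two consecutive layers of a hypercube, mirroring the proof of Theorem~\ref{complete}: the layeredness is read off directly, while the extremal bound comes from recognising the embedding as a partite representation and quoting Theorems~\ref{Erdos} and~\ref{Conlon-rep-exponent}. For layeredness alone (all $k,t\ge1$) I would argue directly. Since every subdivision path has odd length $2k+1$, its two branch endpoints land in different layers, so I place all $A$-branch vertices in a layer $V_\ell$ and all $B$-branch vertices in $V_{\ell+1}$, and route each of the $t^2$ subdivision paths by alternately adding and removing one coordinate, so that the path never leaves $V_\ell\cup V_{\ell+1}$. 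Giving each path its own private transit coordinates makes the map injective and the paths internally disjoint; equivalently, one checks that the resulting direction-colouring is very nice and applies Theorem~\ref{very-nice}.

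For the quantitative bound (even $k\ge4$) I would refine this into a $\kappa$-partite representation with $\kappa=t^2+2$. Partition the coordinates into one \emph{branch} part of size $2t$, carrying a coordinate for each of the $2t$ branch vertices; $t^2$ \emph{transit} parts, one per edge of $K_{t,t}$, each of size $k$; and one further \emph{free} part, of some size $p$ whose value will not matter. The aim is for every upper-layer image to be a transversal of this partition, so that the family $\cH$ of upper vertices is a $\kappa$-partite $\kappa$-uniform hypergraph with part sizes $2t,p,\underbrace{k,\dots,k}_{t^2}$. Then $\cH\subseteq K^{\,t^2+2}(2t,p,k,\dots,k)$, and Theorem~\ref{Erdos} gives $\ex_{\kappa}(n,\cH)=O(n^{\kappa-1/\delta})$ with $\delta=2t\cdot k^{t^2}$, the product of all the part sizes except that of the free part. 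Taking $\alpha=n^{-1/\delta}$ in Theorem~\ref{Conlon-rep-exponent} then yields $\ex(Q_n,G)=O(n^{\,1-1/(\kappa\delta)}2^n)=O(n^b2^n)$ with $\kappa\delta=2t(t^2+2)k^{t^2}$, i.e. exactly $b=1-\frac{1}{2t(t^2+2)k^{t^2}}$; since $b<1$ this is $o(||Q_n||)$.

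The hard part will be building this partite representation, in particular guaranteeing global consistency at the branch vertices. Each $b_j$ is the common endpoint of $t$ distinct subdivision paths, so all of them must terminate at one and the same vertex $f(b_j)$; as a given transit part is moved only by its own path, this forces each path to bring its transit coordinates back to a common base configuration by the time it reaches $b_j$, while still encoding, in some part, the passage from the source $a_i$ to the target $b_j$, and while keeping the $2k$ internal vertices mutually distinct and distinct from those of the other $t^2-1$ paths. Fitting all of this into a path of length $2k+1$ is essentially a parity and counting problem on the $k$ available double-steps, and I expect it to be resolvable exactly when $k$ is even and $k\ge4$: evenness lets every transit counter leave and re-enter its base value, and $k\ge4$ leaves enough double-steps to realise the transit parts at the size needed for $\delta$ to carry the full factor $k^{t^2}$. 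Establishing injectivity, internal disjointness, and the transversal property simultaneously is the technical heart; once the representation is in hand, the reduction via Theorems~\ref{Erdos} and~\ref{Conlon-rep-exponent} is routine.
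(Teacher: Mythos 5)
Your plan follows the paper's proof in outline exactly: an explicit embedding of $T_{2k}(K_{t,t})$ into two consecutive layers, arranged for even $k\ge 4$ to be a $(t^2+2)$-partite representation with one part of size $2t$ for the branch vertices, one part of size at most $k$ for each of the $t^2$ edges, and one large part whose size is excluded from $\delta$; then Theorems \ref{Erdos} and \ref{Conlon-rep-exponent} with $\delta\le 2t\cdot k^{t^2}$ give $b=1-\frac{1}{2t(t^2+2)k^{t^2}}$. This is precisely the paper's partition ($A\cup B$; the sets $\{s_e^1,s_e^2,s_e^4,s_e^6,\dots\}$ for $e\in E(K_{t,t})$; and $\{c\}\cup\bigcup_e\{s_e^3,s_e^5,\dots\}$), and your exponent bookkeeping, including the observation that the largest part drops out of $\delta$, is correct.

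However, the proposal stops short of the actual content: the construction you defer \emph{is} the proof. The paper's argument consists of explicit embeddings (handled separately for $k=1$, $k=2$, and $k\ge 3$; e.g.\ for $k\ge3$ it sets $f(a)=\{a,c\}\cup S$ and $f(b)=\{b\}\cup S$ with $S=\{s_e^1:e\in E(K_{t,t})\}$, and routes the path via $f(z_1)=\{a,c\}\cup S'_e$, $f(z_3)=\{c,s_e^2\}\cup S'_e$, $f(z_5)=\{b,s_e^2\}\cup S'_e$, and so on), together with an injectivity verification; you correctly identify consistency at the branch vertices as the crux, but ``I expect it to be resolvable'' is not a resolution. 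Moreover, your layeredness sketch as stated cannot work: a subdivision path cannot be routed using only ``its own private transit coordinates,'' since its net effect must realise the fixed symmetric difference $f(a)\triangle f(b)$, which necessarily lies in coordinates shared across paths (each branch vertex is the endpoint of $t$ paths). The paper resolves this by having every path also toggle the shared coordinates $a$, $b$, and one global coordinate $c$, while all branch images agree on the common configuration $S$, so that each path returns its private coordinates to base (your parity heuristic is also miscalibrated here: the layered embedding exists for \emph{every} $k\ge 1$, evenness of $k$ being needed only for the transversal, i.e.\ partite, structure). Until such an embedding is written down and verified, the proposal is a correct plan rather than a proof.
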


\begin{proof}
Let $G= T_{2k}(K_{t,t})$. Let partite sets of $K_{t,t}$ be $A$ and $B$,  and respective  sets of branch vertices in $G$ also be $A$ and $B$.  We shall show that $G$ is layered for $k\geq 1$. In case when $k\geq 4$ and even, we show that it has zero Tur\'an density.\\

\noindent
{\bf Case $k=1$.~}  We shall embed $G$ into $Q_n= Q_{2t+1}$ using the embedding $f$ as follows. Let $[n]= X\cup Y\cup \{q\}$, $X=\{x_1, \ldots, x_t\}$, 
$Y=\{y_1, \ldots, y_{t}\}$, where $X$, $Y$ and $\{q\}$ are pairwise disjoint. For the $i$th vertex  $a$ in $A$, let $f(a) = \{x_i\} \cup Y$.
For the $j$th vertex $b$ of $B$, let $f(b) = Y-\{y_j\} \cup \{q\}$. 
For the $ab$-subdivision path $a, z_1, z_2, b$, let $f(z_1) = f(a) - \{y_j\}$ and $f(z_2) = f(z_1) \cup \{q\}$. 
Then this is an embedding in layers $t$ and $t+1$. \\

\noindent
{\bf Case $k=2$.~}  We shall embed $G$ into $Q_n= Q_{2t+3}$ using the embedding $f$ as follows. Let $[n]= X\cup Y\cup \{q_1, q_2, q_3\}$, $X=\{x_1, \ldots, x_t\}$, 
$Y=\{y_1, \ldots, y_{t}\}$, where $X$, $Y$ and $\{q_1, q_2, q_3\}$ are pairwise disjoint. For the $i$th vertex  $a$ in $A$, let $f(a) = \{x_i\} \cup Y \cup \{q_3\}$.
For the $j$th vertex $b$ of $B$, let $f(b) = Y-\{y_j\} \cup \{q_1, q_2\}$. 
For the $ab$-subdivision path $a, z_1, z_2, z_3, z_4, b$, let $f(z_1) = f(a) - \{y_j\}$, $f(z_2) = f(z_1) \cup \{q_1\}$, 
$f(z_3) = f(z_2) - \{q_3\}$, and $f(z_4) = f(z_3) \cup \{q_2\}$. 
This is an embedding in layers $t+1$ and $t+2$.\\

This embedding is injective since for  any subdivision vertex $z$ of the edge $ab$ of $K_{t,t}$, where $a$ is the $i$th and $b$ is the $j$th vertex of respective parts $A$ and $B$, it must be the case that  $f(z)\cap (X\cup Y) = \{x_i\} \cup Y- \{y_j\}$. So subdivision vertices for distinct edges are mapped into distinct vertices.  Other pairs of distinct vertices of $G$ are mapped to distinct vertices as witnessed by $A$, $B$, or $\{q_1, q_2, q_3\}$. \\

\noindent 
{\bf Case $k\geq 3$.~ } We shall show that  $G$ is embeddable in a layer and for even $k\ge 4$, $G$ has a partite representation.
 Let $n= 2t+1+ t^2(k-1)$.  
Let $$[n] =A \cup B  \cup  \{c\} \cup \bigcup_{e\in E(K_{t,t})} S_e,$$
where $A$, $B$, $\{c\}$,  and $S_e$'s are all pairwise disjoint,  and for any $e\in E(K_{t,t})$,   $S_e=\{s_e^1, s_e^2, \ldots, s_e^{k-1}\}$, $|S_e|=k-1$.\\

We shall define an embedding $f$ of $V(G)$ into $V(Q_n)$. 
Recall that vertices of $Q_n$  are subsets of $[n]$.   Consider first the branch vertices. Let $S= \{s_e^1: ~ e\in E(K_{t,t})\}$.  For any $a\in A$, let $f(a) = \{a,c \}\cup S$ and  for  any $b\in B$, let $f(b) = \{b\}\cup S$. \\

 Now, consider two vertices $a, b\in V(K_{t,t})$ forming an edge $e$. 
 Let the $ab$-subdividing path be $a, z_1, \ldots, z_{2k}, b $.  
 Let $S'_e= \{s_{e'}^1: ~ e'\neq e\}$. 
 Then we see that $f(a) =\{a,c \}\cup S= \{a, c,  s_e^1 \} \cup S'_e$ and $f(b) =\{b\}\cup S= \{b, s_e^1\}\cup S'_e$. 
 Let 
 \begin{eqnarray*}
  f(z_1) = \{a, c \} \cup S'_e,&& f(z_2) = \{a, c,  s_e^2 \} \cup S'_e\\
  f(z_3) = \{  c, s_e^2\} \cup S'_e,&& f(z_4) = \{b, c, s_e^2\} \cup S'_e \\
  f(z_5) = \{b, s_e^2 \} \cup  S'_e, && f(z_{2k}) = f(z_{2k-1})\cup \{s_e^1\}.
 \end{eqnarray*}
 
 Furthermore, for $k\geq 4$ and $ 1\leq i \leq k-3$, let 
 $$f(z_{6+2i-2} ) = f(z_{6+2i-3}) \cup \{s_e^{2+i}\}~~ \mbox{ and}   ~~f(z_{6+2i-1} ) = f(z_{6+2i-2}) -\{s_e^{1+i}\}.$$

This is an embedding into layers $t^2$ and $t^2+1$ because each $S’_e$ has size $t^2-1$.
In Table \ref{Knn}, we illustrate this embedding.

\begin{table}
\centering
\begin{tabular} {l||rrr|rr||rrr|rrr||rrr|rrrr} 
& \multicolumn{5}{c||}{$k=3$} & \multicolumn{6}{c||}{$k=4$} & \multicolumn{7}{c}{$k=5$}\\
&$~~~~a$& $b$& $c$& $s_e^1$& $s_e^2$					&$~~~~a$& $b$& $c$& $s_e^1$& $s_e^2$&$s_e^3$			&$~~~~a$& $b$& $c$& $s_e^1$& $s_e^2$&$s_e^3$&$s_e^4$\\
\hline \hline
$f(a)$& 1&0&1&1&0											&1&0&1&1&0&0													&1&0&1&1&0&0&0\\
&&&&& &&&&&& &&&&&&&\\
$f(z_1)$& 1&0&1&0&0										&1&0&1&0&0&0													&1&0&1&0&0&0 &0\\
$f(z_2)$& 1&0&1&0&1										&1&0&1&0&1&0													&1&0&1&0&1&0&0\\
$f(z_3)$& {\bf 0}&{\bf 0} &{\bf 1} & {\bf 0} &{\bf 1}	&{\bf 0}&{\bf 0} &{\bf 1} & {\bf 0} &{\bf 1} & {\bf 0}	&{\bf 0}&{\bf 0} &{\bf 1} & {\bf 0} &{\bf 1} & {\bf 0} &{\bf 0}\\
$f(z_4)$& {\bf 0}&{\bf 1} &{\bf 1} & {\bf 0} &{\bf 1}	&{\bf 0}&{\bf 1} &{\bf 1} & {\bf 0} &{\bf 1}& {\bf 0}		&{\bf 0}&{\bf 1} &{\bf 1} & {\bf 0} &{\bf 1}& {\bf 0}&{\bf 0}\\
$f(z_5)$& {\bf 0}&{\bf 1} &{\bf 0} & {\bf 0} &{\bf 1}	&{\bf 0}&{\bf 1} &{\bf 0} & {\bf 0} &{\bf 1}& {\bf 0}		&{\bf 0}&{\bf 1} &{\bf 0} & {\bf 0} &{\bf 1}& {\bf 0}&{\bf 0} \\
$f(z_6)$& 0&1&0&1&1										&0&1&0&0&1&1													&0&1&0&0&1&1&0\\
$f(z_7)$& &&&&												&0&1&0&0& 0&1													&0&1&0&0& 0&1&0\\
$f(z_8)$& &&&&												&0&1&0& 1& 0&1												&0&1&0&0& 0&1&1\\
$f(z_9)$& &&&&												& &&&&&															&0&1&0&0& 0&0&1\\
$f(z_{10})$& &&&&											& &&&&&															&0&1&0& 1& 0&0&1\\
&&&&& &&&&&& &&&&&&&\\
$f(b)$& 0&1&0&1&0											&0&1&0& 1& 0&0												&0&1&0& 1& 0&0&0
\end{tabular}

\caption{Indicator vectors for $f(a), f(z_1), \ldots, f(z_{2k}), f(b)$, respectively, restricted to $a, b, c, s_e^1, s_e^2,  \ldots, s_e^{k-1}$ in order, for $k=3, 4, $ and $5$.\label{Knn}}
\end{table}
\vskip 1cm

This embedding is injective since distinct branch vertices are clearly mapped into distinct vertices of $Q_n$. Moreover, any branch vertex $x$ and any subdivision vertex $z$ are mapped to different vertices by $f$ because $f(x) \cap \bigcup_{e\in E(K_{t,t})} S_e = S$ and $f(z)  \cap \bigcup_{e\in E(K_{t,t})} S_e \neq S$.
For any two vertices $z, z'$ subdividing an edge $e$, it is clear from the definition that $f(z)\neq f(z')$. 
Finally for a vertex $z$ subdividing an edge $e$ and vertex $z'$ subdividing an edge $e'$,  $e\neq e'$, $f(z) \cap S_e \neq f(z') \cap S_{e'}$.\\

We see that, for even $k\geq 4$,  this embedding gives a partite representation with parts
\begin{itemize}
\item{}
$A\cup B$, 
\item{} $\{s_e^1, s_e^2, s_e^4, s_e^6, \ldots \}$, $e\in E(K_{t,t})$, and 
\item{} 
$\{c\} \cup   \bigcup_{e\in E(K_{t,t})} \{s_e^3, s_e^5, s_e^7, \ldots\}.$
\end{itemize}

The sizes  $\ell_1, \ldots, \ell_{t^2+2}$ of the parts are at most $2t, k, k, \ldots, k, kt^2$, respectively.
By  Theorem \ref{Erdos}, with $q= t^2+2$ instead of $k$, and $\delta = \ell_1 \cdots \ell_{q-1} \leq 2t\cdot k^{t^2}$,   $$\ex_{q}(n, K^{q}(\ell_1, \ldots, \ell_{q})) = O\left(n^{q - \frac{1}{\delta}}\right) = O(\alpha n^{q}),$$ 
where  $\alpha =  n^{-1/\delta}.$ Note that $\alpha^{\frac{1}{q}}=n^{-\frac{{1}}{\lambda}}$ for $\lambda=2t(t^2+2)k^{t^2}$.
Thus by Theorem \ref{Conlon-rep-exponent}, we have that  $$\ex(Q_n, G) =  O\left(\alpha^{\frac{1}{q}} n2^n\right)=  O\left(n^{-\frac{{1}}{\lambda}} n2^n\right) = o(||Q_n||).$$
\end{proof}

\section{Layered embedding of  theta graphs, non-layered graphs of girth eight, proof of Theorem \ref{girth8}}	\label{girth}

	A graph is a {\it theta graph}  with legs of length $\ell_1, \ldots, \ell_k$  and {\it poles} $v$ and $v'$   if it is a union of $k$ paths of lengths $\ell_1, \ldots, \ell_k$  with endpoints $v$ and $v'$  whose vertex sets pairwise share only $\{v, v'\}$. Here, we shall denote the Hamming distance between two sets or two binary sequences  $x,y$ as $d_H(x,y)$. 
	Note that $C_4$  is a theta graph with two legs of length $2$ and it is not a layered graph.

	\begin{lemma}
		If  $G$ is a theta graph with arbitrary number of legs of length $m\geq 3$ each, then $G$ is cubical.
		If $G$ is a theta graph with $3$ legs of length $2$ each, i.e., $G=K_{2,3}$, then $G$ is not cubical.
		\end{lemma}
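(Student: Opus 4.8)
The lemma has two parts. For the first part, I want to show that a theta graph with $r$ legs of length $m\geq 3$ is cubical. The plan is to construct an explicit embedding into a hypercube, which by the Havel--Moravek theorem is equivalent to producing a nice edge-coloring. I would use $r$ colors, one for each leg, plus colors to encode the internal structure of each path. Concretely, let the two poles be $v$ and $v'$, and denote the $i$th leg by the path $v, w^i_1, w^i_2, \ldots, w^i_{m-1}, v'$. The idea is to assign to each leg a private block of coordinates so that the two poles sit at Hamming distance $0$ around a common ``base point,'' while each leg walks out and returns. The key constraint is that all $r$ legs must connect the \emph{same} two vertices $f(v)$ and $f(v')$, so the multiset of directions (i.e.\ the coloring) used along each leg must produce the same net displacement $f(v)\oplus f(v')$. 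Since $m\geq 3$, each leg has enough edges to realize a fixed target displacement while using distinct intermediate vertices; I would check the coloring is nice by verifying that along each single path every color used has odd total parity matching the path, and that any cycle (formed by two legs) uses each color an even number of times. This last point is automatic once the two legs share the same net displacement and otherwise use disjoint private coordinates.

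For the second part, I need to show $K_{2,3}$ is \emph{not} cubical. By the Havel--Moravek theorem it suffices to show $K_{2,3}$ admits no nice edge-coloring, or equivalently to argue directly about Hamming distances in any purported embedding. The cleaner route is the distance argument. Suppose $f$ embeds $K_{2,3}$ into $Q_n$ with poles $u, u'$ and three common neighbors $x_1, x_2, x_3$. Each $x_i$ is adjacent to both $u$ and $u'$, so $d_H(f(u), f(x_i)) = d_H(f(u'), f(x_i)) = 1$. Hence $d_H(f(u), f(u')) \in \{0, 2\}$, and since $f$ is injective and $u\neq u'$, we get $d_H(f(u), f(u')) = 2$. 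The two coordinates in which $f(u)$ and $f(u')$ differ determine the situation completely: each $x_i$ must differ from $f(u)$ in exactly one coordinate and from $f(u')$ in exactly one coordinate, which forces $f(x_i)$ to be one of the (at most two) vertices ``between'' $f(u)$ and $f(u')$ in the subcube spanned by those two differing coordinates.

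The heart of the contradiction is thus a counting/pigeonhole step: a pair of vertices at Hamming distance $2$ has exactly \emph{two} common neighbors in $Q_n$, namely the two vertices obtained by flipping one or the other of the two differing coordinates. Since $K_{2,3}$ requires three distinct common neighbors $x_1, x_2, x_3$, injectivity of $f$ is violated, a contradiction. I expect this pigeonhole step to be the crux of the argument, though it is short; the main care needed is ruling out $d_H(f(u),f(u'))=0$ (forbidden by injectivity) and confirming that no common neighbor can lie outside the two-coordinate subcube (forced by the unit-distance conditions). The first part, constructing the explicit embedding for $m\geq 3$, is where I anticipate the bulk of the bookkeeping, but it is routine once the private-block structure and the shared net displacement are set up correctly.
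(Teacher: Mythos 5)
Your proposal is correct, and its two halves relate to the paper differently. For the first half you follow essentially the paper's route: the paper also proves cubicality via Havel--Moravek, exhibiting the nice coloring in which the two pole-incident edges of the $i$th leg receive a private color $i$ and the internal edges receive shared colors $x_1,\dots,x_{m-2}$ indexed by distance from the first pole; your plan of ``private colors per leg plus a common net displacement,'' once fleshed out, produces exactly this coloring. One sentence of yours is wrong as literally stated: the poles cannot sit at Hamming distance $0$, since the embedding must be injective; the correct constraint is the one you state immediately afterwards, namely that every leg realizes the same \emph{nonzero} displacement $f(v)\oplus f(v')$. Note that equal displacements supported in disjoint private blocks would force the displacement to vanish, so the displacement must be carried by shared coordinates --- and this is precisely where $m\geq 3$ enters: for $m=2$ each leg's two edges would have to be the two displacement coordinates in some order, so among three legs two would repeat a pattern and injectivity would fail. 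For the second half you take a genuinely different route from the paper: you argue directly in the cube that $d_H(f(u),f(u'))=2$ and that two vertices of $Q_n$ at Hamming distance $2$ have exactly two common neighbors (flip one of the two differing coordinates), so the three degree-two vertices of $K_{2,3}$ cannot be embedded injectively. The paper instead stays inside its coloring framework: a nice coloring must color every $4$-cycle with the pattern $1,2,1,2$ up to renaming, and this cannot be maintained simultaneously on the three $4$-cycles of $K_{2,3}$. Both arguments are complete; yours is more elementary and self-contained for this direction, while the paper's has the advantage of reusing the nice-coloring machinery that drives the rest of the section.
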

	
	\begin{proof}
	We shall define an edge-coloring of $G$ as follows. 
	Let the edges of the $i$th  leg  incident to the poles be colored $i$, $i=1, \ldots, m$. 
	Let  all edges at distance $k$ from the first pole be colored $x_k$,  $k=1, \ldots, m-2$, for distinct $x_1, \ldots, x_{m-2}$
	different from any of $1, \ldots, m$.
Then this coloring satisfies the 	properties of Havel-Moravek, \cite{HM}.
To prove the second statement of the lemma, observe that a nice coloring must assign colors $1, 2, 1, 2$ to the edges of any $C_4$ up to renaming the colors. This is impossible to maintain in a $K_{2,3}$.
 \end{proof}

	 	\begin{lemma}\label{theta-symmetric}
	Let $G$ be a theta graph with poles $a$ and $a'$ and $t$ legs of length $m$ each, $t>\lceil \frac{m}{2} \rceil$.  If $G$ is a subgraph of a layer, then  $d_H(a, a')<m$. 
\end{lemma}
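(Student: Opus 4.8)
The plan is to argue by contradiction. Since each leg is a path of length $m$ joining $a$ to $a'$, we always have $d_H(a,a') \le m$, so it suffices to rule out the extremal case $d_H(a,a') = m$. Suppose $G$ is embedded in the edge layer spanned by $V_{k-1}$ and $V_k$. The first fact I would record is that any path inside this layer alternates strictly between adding and removing an element: the layer is bipartite with parts $V_{k-1}$ and $V_k$, so along a path the set size oscillates between $k-1$ and $k$, which forces the $\pm 1$ steps to alternate. In particular every leg begins with the same type of step, an addition if $a \in V_{k-1}$ and a removal if $a \in V_k$, determined only by the layer containing $a$.

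Next I would exploit the rigidity forced by $d_H(a,a') = m$. In this case each leg is a geodesic of $Q_n$, hence flips each of the $m$ coordinates in which $a$ and $a'$ differ exactly once and flips no other coordinate. Let $D^+$ be the set of these coordinates lying in $a' \setminus a$ and $D^-$ the set of those lying in $a \setminus a'$; then the additions along any leg use exactly the coordinates of $D^+$ and the removals exactly those of $D^-$. Assume $a \in V_{k-1}$, the case $a \in V_k$ being symmetric after exchanging $D^+$ and $D^-$. Then each leg starts with an addition, so its sequence of $\pm 1$ steps is $+,-,+,\dots$ of length $m$ and contains exactly $\lceil m/2 \rceil$ additions; since the leg is a geodesic these additions are precisely the coordinates of $D^+$, whence $|D^+| = \lceil m/2 \rceil$.

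Finally I would compare the first edges of the $t$ legs. Each first edge is an addition, so its direction lies in $D^+$. If two legs had first edges of the same direction $d$, their second vertices would both equal $a \cup \{d\}$, and since $m \ge 2$ this vertex is internal to both legs, contradicting that the legs meet only at $\{a, a'\}$. Thus the $t$ legs use $t$ distinct directions of $D^+$, forcing $t \le |D^+| = \lceil m/2 \rceil$ and contradicting the hypothesis $t > \lceil m/2 \rceil$. This contradiction yields $d_H(a,a') < m$.

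I do not anticipate a serious obstacle. The one point that must be stated with care is the strict alternation of add/remove steps within a layer, since this is exactly what turns the geodesic condition into the precise count $|D^+| = \lceil m/2 \rceil$; combined with the elementary observation that distinct legs cannot share a first-edge direction, it immediately produces the bound on $t$. Notably the argument uses only the bipartite layer structure rather than the full very-nice-coloring characterisation of Theorem \ref{very-nice}, although it is entirely consistent with the latter.
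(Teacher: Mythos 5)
Your proof is correct and takes essentially the same approach as the paper's: both reduce to the extremal case $d_H(a,a')=m$, note that each leg is then a geodesic whose alternating addition/removal steps within the bipartite layer force every leg's first edge to use one of the at most $\lceil m/2\rceil$ coordinates where $a$ has a $0$ among the differing positions, and conclude $t\le\lceil m/2\rceil$, contradicting the hypothesis. You merely spell out two points the paper leaves implicit, namely the distinctness of the legs' first-edge directions and the symmetric case in which $a$ lies in the upper layer.
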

	 
	 \begin{proof}
	 Assume that $d_H(a, a')\geq m$. Since there is a path of length $m$ between $a$ and $a'$, $d_H(a, a')=m$.
	 Assume without loss of generality that $a$ is in a lower or the same layer as $a'$.
	 Let $S$ be the set of $m$ coordinates where $a$ and $a'$ differ. Since the number of $0$'s in $a$ and $a'$ differ by at most one,  $a$ has at most $\lceil \frac{m}{2} \rceil$ zeros in positions from $S$.
	 Then for  any $a, a'$-path $P$ of length $m$, and any $s\in S$,  there should be an edge with a star with position in $s$.
	 Thus, each edge  of $P$ has stars only in positions from $S$. 
	 Moreover, a first edge of $P$ can have stars only in positions corresponding to $0$'s of $a$. Hence,
	 there are at most $\lceil \frac{m}{2} \rceil$ such edges. Thus, $t\leq \lceil \frac{m}{2} \rceil$.
		 \end{proof}
	
	\begin{lemma}
	Let $G$ be a theta graph  with poles $a$ and $a'$ with $3$ legs of length $3$ each. Then $G$ is not embeddable in a layer.
	\end{lemma}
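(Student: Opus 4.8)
The plan is to argue by contradiction. Suppose $G$, with poles $a,a'$ and three internally disjoint legs of length $3$, is embedded in the $k$th edge layer of some $Q_n$, i.e.\ into $V_{k-1}\cup V_k$. The first step is to pin down $d_H(a,a')$. Since $G$ has $t=3$ legs of length $m=3$ and $3>\lceil 3/2\rceil = 2$, Lemma~\ref{theta-symmetric} yields $d_H(a,a')<3$. At the same time $a$ and $a'$ are joined by a path of length $3$ in $Q_n$, and $Q_n$ is bipartite, so $d_H(a,a')$ is odd. As $a\neq a'$, the only possibility is $d_H(a,a')=1$. Hence $a$ and $a'$ differ in a single coordinate $c$; assuming without loss of generality that $a$ lies in the lower layer, $a\in V_{k-1}$ and $a'=a\cup\{c\}\in V_k$.

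The second step extracts a contradiction from a single leg $a,u,v,a'$. As the edge layer is bipartite between $V_{k-1}$ and $V_k$, this path alternates strictly, so $u\in V_k$, $v\in V_{k-1}$, and the three edges have directions $d_1,d_2,d_3$, where $a\to u$ adds $d_1$, the down-step $u\to v$ deletes $d_2$ (so $d_2\in u$), and $v\to a'$ adds $d_3$. Requiring the net effect $((a\cup\{d_1\})\setminus\{d_2\})\cup\{d_3\}$ to equal $a\cup\{c\}$ forces $\{d_1\}\,\triangle\,\{d_2\}\,\triangle\,\{d_3\}=\{c\}$. I would then run through the few ways three singletons can have symmetric difference $\{c\}$: all three equal $c$; or two coincide and the third equals $c$. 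In every case one checks that the leg collapses --- either $u=a'$ or $v=a$ --- or else the prescribed down-step would have to delete a coordinate absent from $u$, pushing $v$ into $V_{k+1}$ and breaking the alternation. Any of these contradicts $u,v$ being internal vertices distinct from the poles.

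The case analysis of the second step is the main (if modest) obstacle; everything else is bookkeeping. Conceptually it says that one cannot route a path of length $3$ between two adjacent vertices inside a single edge layer: the two up-steps add two coordinates while the single down-step removes only one, so matching the net change to the single coordinate $c$ is impossible without a vertex collision. Note that this already fails for one leg, so the hypothesis of three legs is used only to invoke Lemma~\ref{theta-symmetric} and exclude the geodesic case $d_H(a,a')=3$; once the distance is forced down to $1$, no leg of length $3$ survives, and therefore $G$ is not embeddable in any layer.
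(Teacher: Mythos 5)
Your proof is correct. Its first half coincides with the paper's: both invoke Lemma~\ref{theta-symmetric} with $t=3>\lceil 3/2\rceil$ to get $d_H(a,a')<3$, then use bipartiteness/parity to force $d_H(a,a')=1$. Where you diverge is the finish. The paper concludes in one line: since $d_H(a,a')=1$, the edge $aa'$ of $Q_n$ lies in the same edge layer as $G$ (its endpoints sit in $V_{k-1}$ and $V_k$), so $aa'$ together with any single leg forms a $C_4$ embedded in the layer, contradicting the already-noted fact that $C_4$ is not layered. You instead reprove this obstruction from scratch by a coordinate case analysis on the toggled directions $d_1,d_2,d_3$ of one leg, using $\{d_1\}\triangle\{d_2\}\triangle\{d_3\}=\{c\}$; your four cases ($d_1=d_2=d_3=c$; $d_1=d_2$, $d_3=c$; $d_1=d_3$, $d_2=c$; $d_2=d_3$, $d_1=c$) are exhaustive and each correctly yields either a vertex collision ($u=a'$ or $v=a$) or an impossible down-step, so the argument is complete. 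In effect your case analysis is an unpacked, self-contained proof that a $4$-cycle cannot lie in a single edge layer, specialized to this configuration: the paper's reduction is slicker and reuses a known fact, while yours is more elementary and makes explicit the mechanism (two up-steps versus one down-step cannot net a single coordinate without a collision). Your closing remark that the three legs are needed only to rule out the geodesic case $d_H(a,a')=3$ likewise matches the paper's structure exactly.
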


	\begin{proof}
Assume that  $G$ is  layered. By Lemma \ref{theta-symmetric} we have that $d_H(a, a') <3$. Since $a$ and $a'$ are in different vertex layers, $d_H(a, a') =1$. Then the edge $aa'$ and one of the legs of $G$ form a $C_4$,  a contradiction since $C_4$ is not a layered graph.
	\end{proof}

Let $G_8'$ be a theta graph with poles $a$ and $a'$, three legs of length $4$ each, a vertex $u$ adjacent to $a$ on one leg and a vertex $u'$ adjacent to $a'$ on another leg. 
Let $G_8$ be a union of $G_8'$ and a $u,u'$-path of length $4$ internally disjoint from $G_8'$. See Figure \ref{fig:cubicalG8} (i).

\begin{figure}[h]
\centering
\includegraphics[scale=0.6]{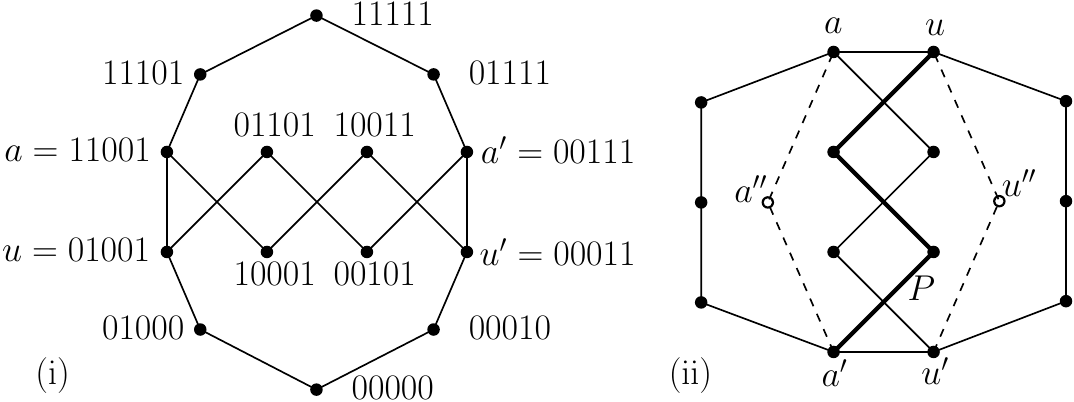}
\caption{(i) Embedding of $G_8$ into $Q_5$, \quad (ii) the layered graph $G_8\cup P_a \cup P_u$}
\label{fig:cubicalG8}
\end{figure}

Now, Theorem \ref{girth8} follows immediately from the following lemma.

\begin{lemma}
The graph $G_8$ is cubical, girth $8$ and not layered.
\end{lemma}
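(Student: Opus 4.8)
The plan is to verify the three assertions in turn, treating ``not layered'' as the crux.

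\emph{Girth and cubicality.} First I would record a clean combinatorial description of $G_8$: it is the subdivision of the complete graph $K_4$ on the four branch vertices $a,a',u,u'$ in which the three pairs of opposite (non-adjacent) edges $\{aa',uu'\}$, $\{au',a'u\}$, $\{au,a'u'\}$ are subdivided into paths with $4$, $3$, and $1$ edges, respectively. Every cycle of $G_8$ is a subdivision of a triangle or a $4$-cycle of $K_4$, so its length is the corresponding sum of these edge-lengths. Each of the four triangles of $K_4$ uses exactly one edge from each opposite pair, hence has length $4+3+1=8$; the three $4$-cycles have lengths $3+3+1+1=8$, $4+4+1+1=10$, and $4+4+3+3=14$. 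Thus the girth is exactly $8$. For cubicality I would exhibit the explicit embedding into $Q_5$ of Figure~\ref{fig:cubicalG8}(i) and check that it is injective and adjacency-preserving (equivalently, that the induced direction-coloring is nice, by the theorem of Havel and Moravek).

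\emph{Reducing ``not layered'' to Hamming distances.} Suppose for contradiction that $G_8$ lies in two consecutive vertex layers $V_{k-1}\cup V_k$ of some $Q_n$. Write the leg through $u$ as $a,u,v_1,v_2,a'$, the leg through $u'$ as $a,w_1,w_2,u',a'$, and the added path as $u,q_1,q_2,q_3,u'$. The subgraph $G_8'$ is a theta graph with poles $a,a'$ and three internally disjoint legs of length $4$; since $3>\lceil 4/2\rceil$, Lemma~\ref{theta-symmetric} gives $d_H(a,a')<4$. As $a$ and $a'$ are joined by a path of even length they lie in the same vertex layer, so $d_H(a,a')$ is even, whence $d_H(a,a')=2$. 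The key extra observation is that $G_8$ contains a \emph{second} such theta graph, with poles $u,u'$ and three internally disjoint legs of length $4$, namely the added path $u,q_1,q_2,q_3,u'$, the path $u,a,w_1,w_2,u'$, and the path $u,v_1,v_2,a',u'$. The same argument yields $d_H(u,u')=2$. Writing $D=a\oplus a'$ and choosing $x,y$ with $u=a\oplus e_x$ and $u'=a'\oplus e_y$ (these exist since the edges $au$ and $u'a'$ force $d_H(u,a)=d_H(u',a')=1$), the constraint $|D\oplus e_x\oplus e_y|=d_H(u,u')=2$ forces either $x=y$, or exactly one of $x,y$ to lie in $D$.

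\emph{The final contradiction.} The main tool is the elementary fact that \emph{two vertices joined by a path of length $3$ inside $V_{k-1}\cup V_k$ are at Hamming distance $3$}, which I would prove by a short set-containment argument: if their distance were $1$, one checks that the two heavier vertices of the path are forced to coincide, contradicting simplicity. Now I split into cases. If $x\in D$ then $d_H(u,a')=|e_x\oplus D|=1$, while $u$ and $a'$ are joined inside the layers by the length-$3$ sub-path $u,v_1,v_2,a'$ --- impossible. Symmetrically, if $y\in D$ the length-$3$ sub-path $a,w_1,w_2,u'$ is ruled out. In the remaining case $x,y\notin D$ the distance constraint forces $x=y$; then $d_H(u,a')=3$, so $u,v_1,v_2,a'$ must be a geodesic flipping the three coordinates of $D\cup\{x\}$. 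Complementing if necessary so that $u,u'$ lie in the upper layer, the unique weight-increasing flip is the one at the coordinate in $D\setminus a$, and the alternating weights force it to occur in the middle; the two resulting geodesics pass respectively through $a$ and through $a'\oplus e_x=u'$, each a previously used vertex. This contradiction completes the proof.

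I expect this last case --- the weight-tracking that pins both admissible geodesics to an already-occupied vertex --- to be the main obstacle, since it is the only point where knowing $d_H(a,a')=d_H(u,u')=2$ is not by itself enough and one must exploit the precise interaction of the two theta graphs together with the adjacencies $u\sim a$, $u'\sim a'$.
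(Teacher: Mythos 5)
Your proof is correct, and after the shared opening move it diverges from the paper's in a genuine way. Both arguments begin identically: apply Lemma~\ref{theta-symmetric} to the theta graph $G_8'$ with poles $a,a'$ and to the second theta graph with poles $u,u'$ (whose three length-$4$ legs are the added path, $u,a,w_1,w_2,u'$, and $u,v_1,v_2,a',u'$), then use parity and injectivity to conclude $d_H(a,a')=d_H(u,u')=2$. From there the paper goes through its coloring machinery: it picks midpoints $a'',u''$ of shortest $a,a'$- and $u,u'$-paths lying inside the two layers (new vertices, else a $C_4$ appears), so that $G_8\cup P_a\cup P_u$ is layered and carries the very nice direction-coloring of Theorem~\ref{very-nice}; the $6$-cycle $uaa''a'u'u''$ forces $c(ua)=c(u'a')$ via the $123123$ direction pattern, and the length-$3$ path between these two equal-colored edges contradicts very-niceness. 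You instead stay entirely at the coordinate level: writing $u=a\oplus e_x$, $u'=a'\oplus e_y$, $D=a\oplus a'$, you kill the cases where $x\in D$ or $y\in D$ with the elementary fact that a length-$3$ path inside two consecutive layers joins vertices at Hamming distance exactly $3$ (true as stated: simplicity forces the three flipped coordinates to be distinct), and in the remaining case $x=y\notin D$ you enumerate the two geodesics realizing the leg $u,v_1,v_2,a'$ (the unique up-step must add the element of $D\setminus a$ in the middle) and show they pass through $a$ and $u'$ respectively, violating injectivity --- I checked this endgame and it is airtight. Your route buys self-containedness: no auxiliary vertices, no appeal to the very nice coloring characterization, and no need to verify that the $6$-cycle's six vertices are distinct or that the connecting path avoids the relevant color; the paper's route is shorter given that Theorem~\ref{very-nice} is already in hand and generalizes more readily to configurations where coordinate bookkeeping gets unwieldy. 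A further small plus: you actually verify the girth-$8$ claim via the clean description of $G_8$ as the subdivision of $K_4$ with opposite edge pairs subdivided into $4$-, $3$-, and $1$-edge paths (cycle lengths $8,8,10,14$), a check the paper's proof omits, while for cubicality you rely, like the paper, on the explicit embedding of Figure~\ref{fig:cubicalG8}(i).
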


\begin{proof}
To see that $G_8$ is cubical, see an embedding in $Q_5$ shown in  Figure \ref{fig:cubicalG8} (i).  
From now on, we assume that $G_8$ is embedded into two vertex layers $V_\ell$ and $V_{\ell+1}$ of $Q_n$ for some $n$. 
Let $X=V_\ell\cup V_{\ell+1}$.\\

By Lemma \ref{theta-symmetric} and using parity, $d_H(u,u')=d_H(a,a')=2$ and thus in particular, $u$ and $u'$ are in the same layer of $Q_n$ and 
$a$ and $a'$ are in the same layer of $Q_n$. 
Consider  internal vertices  $a''$, $u''$ on a shortest $a,a'$-path $P_a$ and $u,u'$-path $P_u$  in $Q_n$, respectively, such that $a'', u''\in X$.  We see that $a''\neq u''$ and  $a'' , u'' \not\in V(G_8)$ since otherwise $G_8$ contains a $4$-cycle. 
Then $G_8\cup P_a \cup P_u$ is embedded in $X$ and thus has a very nice coloring $c$.
We know that the edges in any $6$-cycle in $Q_n$ have exactly three directions giving  a very nice coloring, $123123$, up to renaming the colors.
In particular, for the $6$-cycle $uaa''a'u'u''u$ we have that $c(ua)=c(u'a')$. 
However, there is a path $P$ of length $3$ between $ua$ and $u'a'$, see Figure \ref{fig:cubicalG8} (ii), contradicting the fact that $c$ is very nice. 
\end{proof}

\section{Lower bounds for $\ex(Q_n, C_{10})$, proof of Theorem \ref{thm:C10}}\label{sec:C10}

In order to obtain a lower bound on  $\ex(Q_n, C_{10})$, we shall make a construction that uses a construction by Conder   \cite{conder} several times.   Conder's construction uses an edge-coloring of $Q_n$ that we call a {\it prefix coloring}. 
Recall that we represent an edge $AB$, $A\subseteq B$ in $Q_n$ by a sequence of length $n$, 
where the $i$th position is occupied by $0$ if $i\notin B$,  by $1$ if $i\in A$ and by $\star$ if $i\in B\setminus A$. 
If $e$ is a  vector of $1$s, $0$s, and a star,  let  $\pre(e)$ be the the number of $1$'s in the positions of $e$ preceding the star position and  $\suf(e)$ be  the number of $1$'s in the positions of $e$ following the star position. 
Let $$f(e)= \pre(e)-\suf(e) \pmod 3.$$  Then $f$ is called the {\it prefix coloring} of $e$.  For example $f(01001\star 01) = 2 - 1 =1 \mod 3 $.

\begin{proof}[Proof of Theorem \ref{thm:C10}]
We are to prove that $\ex(Q_n, C_{10}) = \Omega\left( \frac{n2^n}{\log^a n} \right)$, where $a = \log_2 3$.
Let $\pi$ be a permutation of $[n]$.  For an edge $e=(x_1, \ldots, x_n)$ of $Q_n$ given in star representation, we let $e_\pi$ be the representation of $e$ with respect to $\pi$, that is, a vector $(x_{\pi(1)}, \ldots, x_{\pi(n)})$.
Let $\Pi$ be a smallest  set of permutations of $[n]$ such that for any ordered set $(a, b, c)$,  with distinct elements $a, b, c \in [n]$,  there is $\pi \in \Pi$ such that $\pi^{-1}(a)< \pi^{-1}(b)$ and $\pi^{-1}(a)< \pi^{-1}(c)$.
By a result of Spencer \cite{S}, there exists such a set with $|\Pi|\leq  \log_2\log_2 n$. Let $\Pi = \{\pi_1,  \pi_2, \ldots \}$.  \\

We shall define an edge-coloring $g$ of $E(Q_n)$ as follows. 
Let, for $e\in E(Q_n)$, $$g(e) = \left(g_0(e), g_{\pi_1}(e), \ldots, g_{\pi_{|\Pi|}}(e)\right),$$ where for any $\pi\in \Pi$ we have that
$g_\pi(e)$ is  a prefix coloring, i.e.,  $g_\pi(e)=f(e_{\pi})$ and $g_0(e)$ is equal to the parity of a layer containing $e$, i.e., $g_0(e)$ is $0$ if $e$ is in an even layer and it is $1$ if $e$ is in an odd layer.  \\

Since each prefix coloring uses exactly three colors, the total number of colors used by $g$ is $2\cdot 3^{|\Pi|}$. 
We shall argue that there is no monochromatic $C_{10}$ under this coloring. Then taking a largest color class of $g$, we obtain a desired $C_{10}$-free subgraph of $Q_n$. \\

Consider a copy $C$ of $C_{10}$ in $Q_n$. 
If $C$ is monochromatic under $g$, it is in particular monochromatic under $g_0$. Note that since $C$ is connected, and $g_0$ distinguishes even and odd edge layers, $C$ must be contained in some edge layer.
Consider a very nice coloring  $\eta$ of $C$ corresponding to the directions of its edges. Each color in $\eta$ must  appear an even number of times.
If there is a color in $\eta$ that appears $4$ times, there are two edges of that color that are at distance at most $1$, contradicting the fact that $\eta$ is very nice. 
Thus each color in $\eta$ appears exactly twice. Let these colors be $a, b, c, d, e$. 
I.e.,  $C$ has exactly $5$ star positions on its edges and  these positions are $a, b, c, d, e $.  All vertices of $C$ coincide on all other positions. \\

Consider a hypergraph $H_C$ on the vertex set $\{a, b, c, d, e\}$, whose hyperedges correspond to non-zero positions of edges of $C$, restricted to $\{a, b, c, d, e\}$. 
Because the number of $1$s in every edge must be the same, $H_C$ is a uniform hypergraph of uniformity $2, 3, $ or $4$, such that in some of the edge-orderings, considering intersections of consecutive edges, gives us $5$ distinct sets.
As can be seen by inspection, the possible such hypergraphs  $H_C$,  up to a permutation of $\{a, b, c, d, e\}$ are 
$H_1=\{ab, bc, cd, de, ea\}$,  $H_2=\{abc, bcd, cde, dea, eab\}$,  $H_3=\{cde, dea, aeb, ebc, bcd\}$, $H_4=\{abc, bcd, cde, bde, bda\}$,  or  
$H_5=\{abcd, bcde, cdea, deab, eabc\}$, as shown in Figure \ref{fig:HC}.  See the respective edges of $H_1, H_2, H_3$ and $H_4$ in the list below. Note that $H_5$ is very similar to $H_1$ and is obtained by switching $1$'s and $0$'s in the star representations of the edges. We shall thus not consider $H_5$.\\

\begin{figure}[h]
\centering
\includegraphics[scale=0.55]{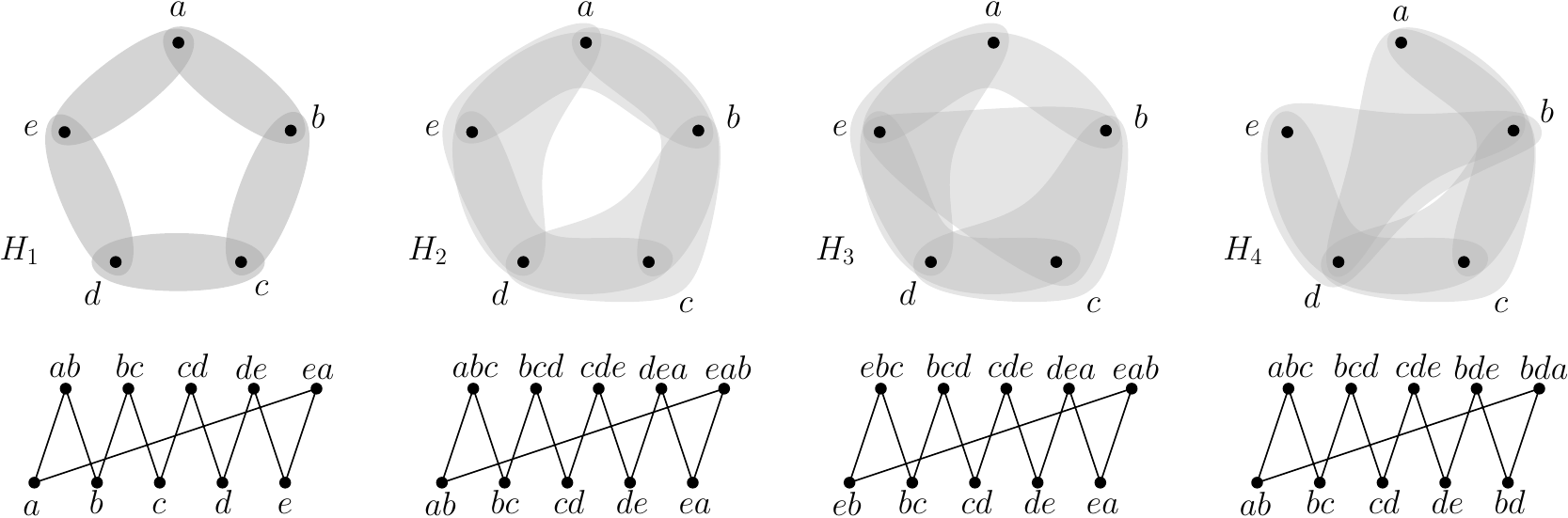}
\caption{Hypergraphs $H_1$, $H_2$, $H_3$ and $H_4$ with corresponding $10$-cycle $C$.}
\label{fig:HC}
\end{figure}

\begin{table}
\begin{tabular}{rrrrr|}
 a& b & c& d & e\\
\hline
 1 & \st & 0& 0& 0\\
 \st & 1& 0 & 0& 0\\
0 & 1& \st& 0& 0\\
0 & \st& 1& 0& 0\\
0 & 0  & 1& \st& 0\\
0 & 0& \st & 1 & 0\\
0 & 0& 0 & 1& \st\\
0 & 0& 0& \st & 1\\
\st   & 0& 0& 0& 1\\
1 & 0& 0& 0& \st\\
$H_1$&& &&
\end{tabular}
\hskip 0.2cm
\begin{tabular}{rrrrrr|}
& a& b & c& d & e\\
\hline
& 1 & 1& \st& 0& 0\\
$e_1^a$ & \st & 1& 1& 0& 0\\
&0 & 1& 1& \st& 0\\
&0 & \st& 1& 1& 0\\
&0 & 0&  1& 1& \st\\
&0 & 0& \st & 1 & 1\\
$e_2^a$ &\st & 0& 0 & 1& 1\\
&1 & 0& 0& \st & 1\\
&1  & \st& 0& 0& 1\\
&1 & 1& 0& 0& \st\\
$H_2$ &&&&&
\end{tabular}
\hskip 0.2cm
\begin{tabular}{rrrrr|}
~ a& b & c& d & e\\
\hline
 0& 0&  1& 1& \st\\
 0 & 0& \st & 1& 1\\
\st & 0& 0&  1 & 1\\
1 & 0& 0& \st& 1\\
1 & \st& 0& 0 & 1\\
\st & 1& 0 & 0& 1\\
0 & 1& \st& 0& 1\\
0  & 1& 1& 0 & \st \\
0 & 1& 1& \st& 0\\
0 & \st& 1& 1& 0\\
$H_3$ &&&&
\end{tabular}
\hskip 0.2cm
\begin{tabular}{rrrrr}
~ a & e & c & d & b\\
\hline
 1 & 0& \st& 0& 1\\
 \st & 0& 1 & 0& 1\\
0 & 0& 1& \st& 1\\
0 & 0& 1& 1& \st\\
0 & \st& 1& 1 & 0\\
0 & 1& \st & 1& 0\\
0 & 1& 0& 1& \st \\
0& \st& 0& 1& 1\\
\st & 0& 0& 1& 1\\
1 & 0& 0& \st& 1\\
$H_4$ &&&&
\end{tabular}

\caption{Star representation of edges in $C$ where $H_C=H_i$, $i\in[4]$.\label{table:Hi}}
\end{table}

We show that $C$ is not monochromatic under some $g_\pi$, $\pi\in\Pi$. 
We distinguish four cases depending on $i\in[4]$ with $H_C=H_i$. Note that in the first three cases we actually prove that $C$ is not monochromatic under any $g_\pi$. 
Fix an arbitrary permutation $\pi\in\Pi$ and assume that $C$ is monochromatic under $g_\pi$. \\

For any two distinct $x,y\in\{a,b,c,d,e\}$ we define an indicator function
$\overrightarrow{xy}= \overrightarrow{xy}_\pi$ that is equal to $1$ if $x$ appears before $y$ in $\pi$, and it is equal to $-1$ otherwise. 
Note that $\overrightarrow{xy}=-\overrightarrow{yx}$, in particular $\overrightarrow{xy}\neq\overrightarrow{yx}$. 
In the remainder of the proof all equations are considered modulo $3$.
Let $e_1^x$ and $e_2^x$ be any two edges in $C$ with star position at the same $x\in\{a,b,c,d,e\}$.
Then let  $X_j=\big\{y\in\{a,b,c,d,e\}: ~ e_j^x$ has a $1$ at position $y\big\}$ for $j\in[2]$.
Note that the prefix coloring of $e_j^x$ with respect to permutation $\pi$ and restricted to $\{a,b,c,d,e\}$ is equal to the sum $\sum_{y\in X_j} \overrightarrow{yx}_\pi$. In every position other than $\{a,b,c,d,e\}$ both $e_1^x$ and $e_2^x$ coincide. Thus 
if $e_1^x$ and $e_2^x$ have the same color, then
\begin{equation}
\sum_{y\in X_1} \overrightarrow{yx}=\sum_{y\in X_2} \overrightarrow{yx}. \tag{$*$}
\end{equation}
For example consider the two edges $e_1^a$ and $e_2^a$ in $C$ with star position at $a$, where $H_C=H_2$, as indicated in Table \ref{table:Hi}.
Then $X_1=\{b,c\}$ and $X_2=\{d,e\}$ and the edges differ only in positions $X_1\cup X_2$, therefore $e_1^a$ and $e_2^a$ have the same color only if $\overrightarrow{ba}+\overrightarrow{ca}=\overrightarrow{da}+\overrightarrow{ea}$. 
\\

{\bf Case 1.  } $H_C=H_1$.\\
Applying $(*)$ for the two edges of $C$ with star position at $a$ provides that $\overrightarrow{ba}=\overrightarrow{ea}$.
  
Similarly, considering edges with star positions at $b, c, d$, and $e$, we have 
$\overrightarrow{ab}=\overrightarrow{cb}, ~
\overrightarrow{bc}=\overrightarrow{dc}, ~
\overrightarrow{cd}=\overrightarrow{ed}, ~
\overrightarrow{de}=\overrightarrow{ae}.$
Then $\overrightarrow{ba}=\overrightarrow{ea}=-\overrightarrow{de}=\overrightarrow{cd}=-\overrightarrow{bc}=\overrightarrow{ab}$. This is a contradiction.
\\

{\bf Case 2.  } $H_C=H_2$.\\

Applying $(*)$ for pairs of edges with star positions in $\{a,b,c,d,e\}$ we obtain five equations, which we then add up:
$$\begin{cases}
\begin{array}{c}
\overrightarrow{ba}+\overrightarrow{ca}=\overrightarrow{da}+\overrightarrow{ea}\\
\overrightarrow{cb}+\overrightarrow{db}=\overrightarrow{eb}+\overrightarrow{ab}\\
\overrightarrow{ac}+\overrightarrow{bc}=\overrightarrow{dc}+\overrightarrow{ec}\\
\overrightarrow{bd}+\overrightarrow{cd}=\overrightarrow{ed}+\overrightarrow{ad}\\
\overrightarrow{ae}+\overrightarrow{be}=\overrightarrow{ce}+\overrightarrow{de}
\end{array}
\end{cases}
\Longrightarrow
\begin{cases}
\begin{array}{c}
\overrightarrow{ba}+\overrightarrow{cd}+\overrightarrow{ae}+\overrightarrow{be}=\overrightarrow{ea}+\overrightarrow{eb}+\overrightarrow{ab}+\overrightarrow{dc}.
\end{array}
\end{cases}$$

As a result $0=\overrightarrow{ea}+\overrightarrow{eb}+\overrightarrow{ab}+\overrightarrow{dc}$. 
If $\overrightarrow{ea} = \overrightarrow{ab}$, the transitivity in $\pi$ implies that $\overrightarrow{eb}=\overrightarrow{ea} = \overrightarrow{ab}$.
Recall that all equations are considered modulo $3$.
Therefore, $\overrightarrow{ea}+\overrightarrow{eb}+\overrightarrow{ab}=0$, so $\overrightarrow{dc}=0$, which is a contradiction.
Thus $\overrightarrow{ae} = \overrightarrow{ab}$. By a symmetric argument $\overrightarrow{ab}=\overrightarrow{cb}, ~
\overrightarrow{bc}=\overrightarrow{dc}, ~
\overrightarrow{cd}=\overrightarrow{ed}, ~
\overrightarrow{de}=\overrightarrow{ae}.$
This is exactly the condition obtained in Case 1, a contradiction.\\

{\bf Case 3.  } $H_C=H_3$.\\
In this case, again consider pairs of edges with star positions at $a$, $b$, etc. and for each pair set up an equation:
$$\begin{cases}
\begin{array}{c}
\overrightarrow{ea}+\overrightarrow{ba}=\overrightarrow{da}+\overrightarrow{ea}\\
\overrightarrow{cb}+\overrightarrow{db}=\overrightarrow{eb}+\overrightarrow{ab}\\
\overrightarrow{dc}+\overrightarrow{ec}=\overrightarrow{bc}+\overrightarrow{ec}\\
\overrightarrow{bd}+\overrightarrow{cd}=\overrightarrow{ad}+\overrightarrow{ed}\\
\overrightarrow{ce}+\overrightarrow{be}=\overrightarrow{ce}+\overrightarrow{de}
\end{array}
\end{cases}
\Longrightarrow
\begin{cases}
\begin{array}{c}
\overrightarrow{ba}=\overrightarrow{da}\\
\overrightarrow{bc}+\overrightarrow{bd}=\overrightarrow{be}+\overrightarrow{ba}\\
\overrightarrow{dc}=\overrightarrow{bc}\\
\overrightarrow{db}+\overrightarrow{dc}=\overrightarrow{da}+\overrightarrow{de}\\
\overrightarrow{be}=\overrightarrow{de}
\end{array}
\end{cases}
\Longrightarrow
\begin{cases}
\begin{array}{c}
\overrightarrow{dc}+\overrightarrow{bd}=\overrightarrow{de}+\overrightarrow{da}\\
\overrightarrow{db}+\overrightarrow{dc}=\overrightarrow{da}+\overrightarrow{de}.
\end{array}
\end{cases}$$
Thus $\overrightarrow{bd}=\overrightarrow{db}$, which is a contradiction.
\\

{\bf Case 4.  } $H_C=H_4$.\\
In each of the prior cases, we showed that $C$ cannot be monochromatic under any $\pi\in\Pi$. In this case, there may be exist a permutation $\pi\in\Pi$ such that $C$ is monochromatic under $g_{\pi}$, see for example, the order of $a,b,c,d,e$ as in Table \ref{table:Hi}. 
Thus for $H_4$, we will only show that $C$ is not monochromatic for some $g_{\pi'}$, $\pi'\in\Pi$.
Assume that $C$ is monochromatic under $g$. Then for any permutation $\pi\in\Pi$, considering pairs of edges with star positions at  $b$ and $e$, the following two equations hold:
 $$
 \begin{cases}
\begin{array}{c}
\overrightarrow{cb}+\overrightarrow{db}=\overrightarrow{eb}+\overrightarrow{db}\\
\overrightarrow{ce}+\overrightarrow{de}=\overrightarrow{be}+\overrightarrow{de}\\
\end{array}
\end{cases}
\Longrightarrow
\begin{cases}
\begin{array}{c}
\overrightarrow{cb}=\overrightarrow{eb}=\overrightarrow{ec}.
\end{array}
\end{cases}$$

Thus  $c$ is between   $e$ and $b$. 
By the way $\Pi$ was selected, there is some permutation $\pi' \in \Pi$ such that $c$ precedes both $b$ and $e$, a contradiction. 
\\

Thus, in at least some coloring $g_\pi$, $\pi\in \Pi$, $C$ is not monochromatic.
Therefore, $C$ is not monochromatic under the coloring $g$. 
The number of colors used by $g$ is $2\cdot 3^{|\Pi|} \leq 2 \cdot 3^{\log_2\log_2 n} = 2\cdot (\log_2 n)^{\log_2 3}$.
Consider a largest color class of $g$ having $n2^{n-1}/ 2\cdot (\log_2 n)^{\log_2 3}$ edges. It contains no copy of $C_{10}$. 
\end{proof}

	Next, we remark that there is a lower bound on $\ex(Q_n, C_{10})$ using a special extremal function for a smaller graph. 
	Let $\ex^*(Q_n, C_6^-)$ be the largest number of edges in a subgraph $G$ of $Q_n$ such that $G$ contains no $C_6^-$, that is a subgraph  $H$ of $Q_n$ on $6$ vertices and $5$ edges such that $H$ is a subgraph of $C_6$ in $Q_n$. Note, that $C_6^-$ forms a path of length $5$, but not every path of length $5$ is a $C_6^-$. For example, the path $00000, 
00001, 00011, 00111, 01111, 11111$ is not a $C_6^-$ because its edges do not form a subgraph of a $6$-cycle.\\

\begin{lemma}\label{lem:C6-}
$\ex^*(Q_n, C_6^-)/4 \leq \ex(Q_n, C_{10})$. 
\end{lemma}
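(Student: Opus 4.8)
The plan is to convert a $C_6^-$-free graph into a $C_{10}$-free one by throwing away a bounded fraction of its edges, using the prefix coloring. First I would take an extremal $C_6^-$-free subgraph $G\subseteq Q_n$, so $\|G\|=\ex^*(Q_n,C_6^-)$, and color $E(G)$ by the prefix coloring $f$, which uses exactly three colors. Letting $G'$ be a largest color class, we have $\|G'\|\ge \ex^*(Q_n,C_6^-)/3$ and $G'\subseteq G$ is monochromatic under $f$. Since $G'$ inherits $C_6^-$-freeness from $G$, the bound $\ex(Q_n,C_{10})\ge \ex^*(Q_n,C_6^-)/3$ follows once we show $G'$ is $C_{10}$-free. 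Thus everything reduces to the structural claim: \emph{every copy of $C_{10}$ that is monochromatic under a single prefix coloring $f$ contains a $C_6^-$ as a subgraph.} Indeed, such a $C_6^-$ would then lie in $G'\subseteq G$, a contradiction.

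The key observation that makes the claim tractable is a clean reformulation of ``contains a $C_6^-$''. A five-edge sub-path of a $C_{10}$ is a $C_6^-$ if and only if the directions of its five edges occur with multiplicities $(2,2,1)$: the two doubly-used directions cancel, so the endpoints of the sub-path differ in exactly the singly-used coordinate, hence are at Hamming distance $1$, and adjoining that edge closes the six distinct vertices into a hexagon in $Q_n$. So it suffices to prove that \emph{a $C_{10}$ in which no window of five consecutive edges has direction-multiplicities $(2,2,1)$ cannot be monochromatic under $f$.} I would first pin down the possible global direction patterns. Writing $d$ for the number of distinct directions (the support), a $d$-direction cycle lives in a $d$-dimensional subcube with $2^d$ vertices, so $2^d\ge 10$ forces $d\ge 4$; and each direction occurs an even, hence at least twice, so $2d\le 10$ gives $d\le 5$. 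Thus the only patterns are $(2,2,2,2,2)$ with $d=5$ and $(4,2,2,2)$ with $d=4$.

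For the main case, $d=5$ confined to a single edge-layer, I would invoke the analysis already carried out in the proof of Theorem~\ref{thm:C10}: the configurations are exactly $H_1,\dots,H_5$ up to relabeling. A short inspection of the star sequences shows that $H_1,H_2,H_3$ (and $H_5\sim H_1$) have no $(2,2,1)$ window, and these are precisely the types shown there to be non-monochromatic under any $g_\pi$; while $H_4$ does contain a $(2,2,1)$ window (five consecutive edges with directions of the form $b,e,c,b,e$), i.e.\ it already contains a $C_6^-$, so it is excluded by $C_6^-$-freeness rather than by the coloring. The crucial point for completing the argument is that the defining relations $(*)$ among the indicators $\overrightarrow{xy}$ hold verbatim in the remaining configurations as well: because every vertex of the cycle agrees on all coordinates outside the support, two edges with a common star position differ only inside the support, and their prefix colors therefore still satisfy $\sum_{y\in X_1}\overrightarrow{yx}=\sum_{y\in X_2}\overrightarrow{yx}$ over $\mathbb{Z}_3$. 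I would run the same $\mathbb{Z}_3$-linear case check on these relations to show that the absence of a $(2,2,1)$ window forces a contradiction to monochromaticity. The \textbf{main obstacle} is carrying out this enlarged case analysis for the configurations not covered by Theorem~\ref{thm:C10}, namely the $d=5$ cycles that span three or more layers (for which the associated hypergraph $H_C$ is no longer uniform, since dropping the layer-parity color $g_0$ removes the constraint that all edges have the same number of ones) and the $d=4$ pattern $(4,2,2,2)$; verifying that each such $C_{10}$ either has a $(2,2,1)$ window or fails monochromaticity is where the bulk of the work lies, and it is exactly what buys the factor $1/3$ rather than the $1/6$ one would get by also using $g_0$.
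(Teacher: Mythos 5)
Your overall strategy coincides with the paper's up to the key reduction: take an extremal $C_6^-$-free $G$, apply the prefix coloring, keep a largest color class $G'$ with $\|G'\|\ge \ex^*(Q_n,C_6^-)/3$, and reduce everything to the claim that a $C_{10}$ monochromatic under a single prefix coloring contains a $C_6^-$. Your reformulation of ``contains a $C_6^-$'' as a window of five consecutive edges with direction multiplicities $(2,2,1)$ is correct (it is equivalent to having a chord at cycle-distance $5$, i.e., to being the union of two $6$-cycles minus the shared edge, which is exactly the paper's non-induced case), and your handling of the single-layer five-direction configurations is sound: Cases 1--3 of the proof of Theorem~\ref{thm:C10} do rule out monochromaticity under any single $g_\pi$ for $H_1,H_2,H_3$ (and $H_5$), while $H_4$ contains the window $b,e,c,b,e$ and so is killed by $C_6^-$-freeness rather than by the coloring.

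The genuine gap is the one you flagged but did not close: the configurations not confined to a single edge layer, and the $d=4$ pattern $(4,2,2,2)$, are never analyzed --- you only assert that the same $\mathbb{Z}_3$-linear check ``should'' yield a contradiction. These cases are nonempty and nontrivial. For example, the $10$-cycle in $Q_5$ through $00000$, $10000$, $11000$, $01000$, $01100$, $01110$, $01010$, $01011$, $00011$, $00001$ has direction sequence $1,2,1,3,4,3,5,2,4,5$: five directions each used twice, spanning vertex layers $V_0$ through $V_3$, with every $5$-window of multiplicity type $(2,1,1,1)$, hence containing no $C_6^-$; it is a chord-at-distance-$3$ type that appears nowhere among $H_1,\dots,H_5$, since that enumeration relied on the layer constraint enforced by $g_0$ (which also guaranteed uniformity of $H_C$ and that no direction repeats four times). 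Without $g_0$ the relations $(*)$ do still hold, as you note, but the set of admissible direction sequences is substantially larger, and in the $(4,2,2,2)$ case the repeated direction produces six pairwise relations; so whether monochromaticity fails in each remaining configuration is precisely the unverified heart of the lemma, not a routine repetition of Cases 1--3. The paper avoids this enlarged enumeration entirely by a different dichotomy: non-induced copies (chord at distance $5$) contain a $C_6^-$ outright, and for the remaining copies it cites the case analysis already carried out in \cite{AM}. As it stands, your argument proves the lemma only for cycles lying in one edge layer, which is strictly weaker than the statement.
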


\begin{proof}
Observe first that any non-induced copy  of $C_{10}$  in $Q_n$ contains $C_6^-$ as a subgraph.
Let $G$ be a largest $C_6^-$-free subgraph of $Q_n$.  It does not contain a copy of $C_{10}$ that is non-induced in $Q_n$. Let $c$ be a coloring of $E(Q_n)$ in $4$ colors with no monochromatic induced copy of $C_{10}$,  see \cite{AM}. Let $G'$ be a largest monochromatic subgraph of $G$ under $c$. We have that $G'$ has no copy of $C_{10}$ that is induced in $Q_n$ and $||G'||\geq ||G||/4= \ex^*(Q_n, C_6^-)/4$.  Thus $G'$  has neither induced, nor non-induced copies of $C_{10}$ and thus it is $C_{10}$-free.  
\end{proof}

We have an exact value for $\ex^*(Q_n, C_6^-)$ if $n=3$:

\begin{lemma}
$\ex^*(Q_3, C_6^-)=8$.
\end{lemma}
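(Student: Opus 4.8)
The plan is to reformulate $C_6^-$-freeness in terms of the $6$-cycles of $Q_3$ and then bound the number of edges by a covering/double-counting argument. First I would observe that a subgraph $G \subseteq Q_3$ contains a copy of $C_6^-$ if and only if some $6$-cycle $H$ of $Q_3$ satisfies $|E(G) \cap E(H)| \ge 5$: indeed, any five edges of a $6$-cycle form a path of length $5$ that is a $C_6^-$, and conversely every $C_6^-$ is, by definition, five of the six edges of some $6$-cycle. Hence $G$ is $C_6^-$-free precisely when every $6$-cycle of $Q_3$ meets $G$ in at most four edges, equivalently when the removed set $R = E(Q_3) \setminus E(G)$ contains at least two edges of every $6$-cycle. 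Thus $\ex^*(Q_3, C_6^-) = 12 - r$, where $r$ is the minimum size of an edge set hitting every $6$-cycle at least twice, and it suffices to show $r = 4$.

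For the lower bound ($\ex^* \ge 8$) I would use the fact that every $6$-cycle of $Q_3$ contains exactly two edges of each of the three directions. This holds because a closed walk flips each coordinate an even number of times, the three even summands total $6$, and no direction can contribute $4$ edges (the four edges of one direction form a perfect matching on all $8$ vertices, too many for a $6$-vertex cycle), forcing the split $(2,2,2)$. Consequently, removing the four edges of a single direction deletes exactly two edges from every $6$-cycle, leaving four; the resulting $8$-edge subgraph is therefore $C_6^-$-free, so $r \le 4$.

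For the upper bound ($\ex^* \le 8$) I would show $r \ge 4$ by double counting, for which the key input is that each edge of $Q_3$ lies in exactly $8$ of its $6$-cycles and that $Q_3$ has exactly $16$ $6$-cycles. Granting this, suppose $R$ has only three edges. Then $\sum_{H} |E(H) \cap R| = \sum_{e \in R} \#\{H : e \in E(H)\} = 3 \cdot 8 = 24$, the sum taken over all $16$ $6$-cycles $H$. If $R$ met every $6$-cycle at least twice, the same sum would be at least $2 \cdot 16 = 32 > 24$, a contradiction. Hence some $6$-cycle contains at most one edge of $R$, that is at least five edges of $G = Q_3 \setminus R$, producing a $C_6^-$; so every $9$-edge subgraph of $Q_3$ contains a $C_6^-$ and $r \ge 4$.

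The main obstacle is establishing the two enumerative facts about $6$-cycles of $Q_3$ cleanly: that there are $16$ of them and that each edge lies in $8$. I would verify these by classifying $6$-cycles according to the pair of vertices they avoid. The $4$ antipodal pairs each leave a unique hexagon, each of the $12$ adjacent pairs leaves a unique hexagon, and pairs at distance $2$ leave none, giving $16$ in total; edge-transitivity of $Q_3$ then distributes the $16 \cdot 6 = 96$ edge–cycle incidences evenly, yielding $8$ per edge. (Alternatively one can confirm ``$8$ per edge'' directly by listing the $6$-cycles through a fixed edge.) Once these counts are in hand, both bounds follow immediately from the arguments above.
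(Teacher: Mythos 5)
Your proof is correct, but it takes a genuinely different route from the paper, chiefly in the upper bound. The paper proves $\ex^*(Q_3,C_6^-)\le 8$ by reducing to $\ex(Q_3,C_6)\le 9$ and then checking by inspection (with a figure) that every configuration of two deleted edges leaves a $6$-cycle; implicitly, a $9$-edge $C_6^-$-free graph plus one edge would contain a full $C_6$, hence the original graph a $C_6^-$. You instead recast $C_6^-$-freeness as the condition that the complement $R=E(Q_3)\setminus E(G)$ meets every $6$-cycle in at least two edges, and kill $|R|=3$ by double counting edge--hexagon incidences: with $16$ hexagons and $8$ hexagons per edge, $3\cdot 8=24<2\cdot 16=32$. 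This replaces the paper's case analysis with two enumerative facts, which you justify soundly (classifying hexagons by the omitted vertex pair: $4$ antipodal and $12$ adjacent pairs each leave a unique hexagon, distance-$2$ pairs leave none since a remaining vertex has degree $1$; then edge-transitivity gives $96/12=8$). Your lower-bound construction is in substance the same as the paper's --- deleting the four edges of one direction leaves two disjoint $C_4$'s --- though you certify $C_6^-$-freeness via the observation that every hexagon of $Q_3$ has direction distribution $(2,2,2)$ rather than by direct inspection. What each approach buys: the paper's check is shorter once one trusts the figure, while your argument is self-contained, avoids casework, and the $(2,2,2)$ and incidence counts are reusable structural facts; the trade-off is that you must establish the hexagon census, which is where any error would hide, and your census is correct.
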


\begin{proof}
To see that $\ex^*(Q_3, C_6^-)\geq 8$ consider a subgraph of $Q_3$ that is a vertex disjoint union of two $C_4$'s.  
For the upper bound it suffices to show that $\ex(Q_3, C_6)\leq 9$.
Let $G$ be a subgraph of $Q_3$ on $10$ edges. It is easy to check that in each configuration of the two non-present edges, there is a $6$-cycle, see Figure \ref{fig:exC6}.
\begin{figure}[h]
\centering
\includegraphics[scale=0.6]{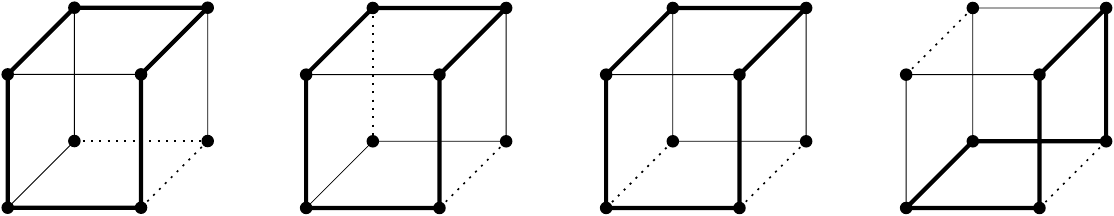}
\caption{$6$-cycle in $Q_3$ in each configuration of non-present edges (dotted).}
\label{fig:exC6}
\end{figure}
\end{proof}

\section{Density of layered graphs}\label{density} 

In this section we prove some results about density of layered graphs. We show that under classical compression operation the density of a layered graph is not decreased. Moreover, if the compressed graph corresponds to initial intervals in colex order, we can show  that the asymptotic density of the graph is at most half of the largest density of a cubical graph on the same number of vertices. \\

Let $k$ and $n$ be integers,  $0\leq k\leq n$,  $\cA\subseteq\binom{[n]}{k}$, and $\cB\subseteq\binom{[n]}{k-1}$. 
	Then we define  the graph $Q(n,k;\cA,\cB)$ to be  a bipartite graph with vertex set $\cA\cup\cB$ where $A\in\cA$ is adjacent to $B\in\cB$ if and only if $B\subset A$, i.e., 
	a graph induced by $\cA\cup \cB$ in $Q_n$.
	
	Fix integers $k, i, $ and $j$, where $0\leq k\leq n$, $1\leq i<j\leq n$,  and let $\cA\subseteq\binom{[n]}{k}$ and $\cB\subseteq\binom{[n]}{k-1}$. 
	Let $R_{ij}$ be the shift operator also called {\it compression} operator. That is, for any set $X\in \cA\cup \cB$,
	\begin{align*}
		R_{ij}(X) 	= 	\left\{ 	\begin{array}{ll}
						 		(X-\{j\})\cup\{i\}, 		&\mbox{if $i\not\in X$, ~ $j\in X$, and $(X-\{j\})\cup\{i\}\not\in \cA\cup \cB;$ }\\
								X, 				&\mbox{else.}
							\end{array}\right.
							\end{align*}
	Note that this is a classical shift operator used in proving, for example, the Kruskal-Katona theorem, see  a survey by Frankl and Tokushige  \cite{FT}. For a nice account of the properties of the shift operation, see a summary by Das \cite{Das}. 
	A family  $\cX$ is called {\it compressed}  if for any $i<j$, $R_{ij}(\cX) = \cX$, where $R_{ij}(\cX) = \{ R_{ij}(X): X\in \cX\}$. Note that $|\cX|= |R_{ij}(\cX)|$.

The following lemma shows that the compression doesn't decrease the size of a layered graph.

\begin{lemma}\label{compressed}
Let $k, i, j $ and $n$ be  integers,  $0\leq k\leq n$, $1\leq i<j\leq n$,   $\cA\subseteq\binom{[n]}{k}$, and $\cB\subseteq\binom{[n]}{k-1}$. 
$||Q(n,k;\cA,\cB)|| \leq ||Q(n,k; R_{ij}(\cA), R_{ij}(\cB))||.$
\end{lemma}

\begin{proof}
	Define $ \cA'= {R}_{ij}(\cA)$ and $\cB' ={R}_{ij}(\cB).$
	Let $G= Q(n,k;\cA,\cB)$ and $G' = Q(n,k;\cA',\cB').$  We shall show that $||G|| \leq ||G'||$. 
	Let us denote $R_{ij}(B)$ as $B'$ for any $B\in \cB$ and $R_{ij}(A)$ as $A'$ for any $A\in \cA$. \\
	
	Consider $B\in\cB$ and $i<j$.  If  the set $B-\{j\}\cup \{i\} \in \cB -\{B\}$, we denote this set as $B^*$, i.e.,  $B^*= B-\{j\}\cup \{i\}$ and say that $B^*$ is the {\it successor} of $B$ and $B$ is the {\it predecessor} of $B^*$.  Note that $B^*$ itself doesn't have a successor,  each $B\in \cB$ has at most one successor and at most one predecessor. 
	Let $\cB= \cB_0\cup \cB_1$, where $\cB_0$ consists of all $B$'s  from $\cB$ that have neither successors nor predecessors and $\cB_1= \cB-\cB_0$, a set that can be partitioned into pairs $B, B^*$. We shall treat elements of $\cB_0$ as singletons and split elements of $\cB_1$ into sets of size two consisting of a set and its successor.
	We shall argue that any vertex from $\cB_0$ after the shift has a degree in $G'$ as high as its degree in $G$. In addition, we shall argue that for any pair $\{B, B^*\}$ in $\cB_1$, 
	the number of edges incident to $B$ or $B^*$ after the shift in $G'$ is as large as the number of edges incident to $B$ or $B^*$ in $G$. This will immediately imply that $||G||\leq ||G'||$.
	
	We consider the cases:
	\begin{enumerate}
	\item{} $B\in \cB_0$
	
	\begin{enumerate}
	\item{} $i\not\in B$ and  $j\in B$\\
	In this case $B' = B-\{j\}\cup\{i\}$.
	If $AB\in E(G)$, then $A=B\cup \{t\}$, $t\neq j$. 
	If $t=i$, then $A'=A$ and $A'B' \in E(G')$.
	If $t\neq i$, then $i\not\in A$. If $A' = A-\{j\} \cup \{i\}$, then $A'B'\in E(G')$.  If $t\neq i$ and $A'=A$, we have that $A_t=A-\{j\}\cup \{i\} \in \cA$. 
	Then we have that $A_tB' \in E(G')$.  We see that $\deg_{G'}(B') = \deg_G(B)$.
	
	\item{} $i\in B$ or $j\not\in B$\\
	In this case  $B'=B$.
	If $i\in B$, then for any $A\in \cA$ such that $AB\in E(G)$, $i\in A$, thus $A'=A$. Thus $A'B'\in E(G')$ in this case.
	If $i\not\in B$ and $j\not\in B$  and $AB\in E(G)$, we have two subcases. If $A'=A$, then $A'B'\in E(G')$. Otherwise $j\in A, i\not\in A$.  Then  $A'= A-\{j\}\cup \{i\}$ and $A'B'\in E(G')$.
	\end{enumerate}

	\item{ }  $B\in \cB_1$\\
	In this case we shall consider a pair $B, B^*$ assuming without loss of generality that $B$ has successor $B^*$.
	We shall argue that $\deg_G(B)+\deg_G(B^*) \leq \deg_{G'}(B') +\deg_{G'}({B^*}')$.

	We have $i\not\in B$, $j\in B$, and $B' = B$. Thus, we have that $\{B', {B^*}'\}=\{B, B^*\}$.
	If $AB\in E(G)$ and $AB^*\in E(G)$ then $A=B\cup \{i\}$ and $A'=A$. Then $A'B', A'{B^*}' \in E(G')$.
	If $AB\in E(G)$ and $AB^*\not\in E(G)$, then $j\in A$, $i\not\in A$. Thus either $A'B^* \in E(G')$ or $A'B\in E(G')$ depending whether $A'\neq A$ or $A'=A$, respectively. 
	If $AB\not\in E(G)$ and $AB^*\in E(G)$, then  $j\not\in A$, $i\in A$. Thus $A' = A$ and $A'B'\in E(G')$.
	So, we see that for any $A'\in \cA$, $A'$ sends at least as many edges to $\{B, B^*\}$ in $G'$ as $A$ to   $\{B, B^*\}$ in $G$.
	\end{enumerate} 
	
	This shows that $||G||\leq ||G'||$. Now, we repeat this shift operation for all pairs $i<j$ and produce two compressed families $\cA'' \subseteq \binom{[n]}{k}$ and $\cB''\subseteq \binom{[n]}{k-1}$,  $|\cA|=|\cA''|$, $|\cB|=|\cB''|$, as well as a graph  $G''= Q(n, k, \cA'', \cB'')$ such that $||G''||\geq ||G||$, as desired.
	\end{proof}

\vskip 1cm 

We say that the graph is {\it in the $k$th layer} if its edges are in  the $k$th edge layer of some hypercube.


So, we see that in order to find a largest density of a $t$-vertex layered graph, it is sufficient to find such a density for a compressed graph. 
A special class of compressed set families are those corresponding to the initial interval in colex order. Unfortunately there are compressed families, for example $\{ \{1,2\}, \{1,3\}, \{1,4\}\}$ that do not form an initial interval in colex order. \\

Next, we shall consider only families forming initial segments in colex order.
A set $A$ is less than set $B$ in the colex order if the largest element in the symmetric difference of $A$ and $B$ is in $B$.
	For positive integers $N_A$ and $N_B$, we define the graph $Q(n,k;N_A,N_B)$ to be  the graph $Q(n,k;\cA,\cB)$ where $\cA\subseteq\binom{[n]}{k}$ and $\cB\subseteq\binom{[n]}{k-1}$, are families, of sizes $N_A$ and $N_B$ respectively, that form initial intervals in colex order. 
	We call a graph {\it a colex-interval}  or {\it colex-interval graph} if it is equal to $Q(n,k;N_A,N_B)$ for some $n, k, N_A, $ and $N_B$.

A layered graph in the $k$th layer is  {\it a super-colex-interval} if it is  a colex-interval and equal to $Q(a,k;\cA,\cB)$ for some integer $a$, where  $\binom{[a-1]}{k} \subset \cA  \subseteq   \binom{[a]}{k}$ and 
$\binom{[a-1]}{k-1} \subseteq \cB \subseteq  \binom{[a]}{k-1}$.  In particular,  if $G$ is a super-colex-interval graph on $t$ vertices and  in layer $k$, then $\binom{a-1}{k} + \binom{a-1}{k-1}< t \leq \binom{a}{k} + \binom{a}{k-1}$, i.e., $\binom{a}{k} < t \leq \binom{a+1}{k}$. \\

\begin{lemma}\label{supercompressed}
Let $k$ and $t$ be natural numbers. Let  $G$ be a colex-interval graph in layer $k$ with $|G|=t$. Then the number of edges in $G$ is either at most $2t$ or  at most the number of edges in a super-colex-interval graph on $t$ vertices in layer $k$.  
 \end{lemma}

\begin{proof}
Let $G=Q(n, k; \cA, \cB)$, where  $|G|= |\cA|+|\cB|=t$, for  some $n$, and $G$ has a largest number of edges among colex-interval graphs on $t$ vertices.  We can assume that $3\leq k \leq n-2$, because otherwise the degrees of vertices in one part of $G$ are at most $2$, so $||G|| \leq 2t$ and we are done.\\

Since $G$ is a colex-interval,  $\cA$  and $\cB$ are initial segments in colex order.  We assume also that $\cA$ and $\cB$ are non-empty.
 Thus,
  $$\binom{[a-1]}{k} \subset \cA \subseteq \binom{[a]}{k} \mbox{ and }  \binom{[b-1]}{k-1} \subset \cB \subseteq  \binom{[b]}{k-1},$$ for some positive integers $a$ and $b$.
If $b=a$ or $\cB=\binom{[a-1]}{k-1}$, then $G$ is a super-colex-interval.  Otherwise we shall find a contradiction.  We shall be treating $\cA$ and $\cB$ as linearly ordered sets with respect to colex order. \\

Assume that  $b>a$. Then any vertex $B\in\cB$ that contains $b$ has no neighbors in $\cA$. We can  replace $B$ with $A'$, the member of $\binom{[n]}{k}-\mathcal{A}$ which is smallest in colex order. Then $A'$ has some neighbors in $\cB-\{B\}$, contradicting the maximality of $||G||$.\\

Now assume that $b\le a-1$ and $\cB\neq \binom{[a-1]}{k-1}$.
In this case we can take $n=a$. We assumed in the beginning of the proof that $k\leq n-2 = a-2$.
Let $A$ be the last vertex of $\cA$ in colex order. Note that $a\in A$, thus $A$ has at most one neighbor in $\cB$.
We replace $A$ with the vertex 
$B' \in \binom{[a-1]}{k-1}$ such that $B'\not\in \cB$ and it follows the last member of $\cB$ in colex order.
Since $\cA$ contains all $k$-element subsets of $[a-1]$ and $a\geq k+2$, we see that $B'$ has at least two neighbors in $\cA-\{A\}$.
This results in a graph on a larger number of edges than $G$ and that is a colex-interval, a contradiction.
\end{proof}

\vskip 1cm

\begin{proposition}\label{super}
If $G$ is a layered graph on $t$ vertices that is a colex-interval graph, then $||G|| \leq \frac{1}{4} t \log t (1+o(1))$. 
\end{proposition}

\begin{proof}

Let  $G$ be in layer  $k$, for some $k$.
We can assume by Lemma \ref{supercompressed} that $G$ is a super-colex-interval.

Let $x$ be the real number such that $t = \binom{x}{k}$.  Then $G \subseteq Q_{a}$ for  $a$ satisfying $a < x \leq a+1$.  Since $G$ is in layer $k$ of $Q_a$, we have that $k\leq a$. \\

{\it Case 1. ~}  $2k-2  \leq x \leq 2k+2 $\\
In this case $ 2k-3 \leq a \leq 2k+2 $.
Then $t = \binom{2k}{k}C(1+o(1))$, where $\frac{1}{4} \leq C \leq 4$.  In particular,    $k= \frac{1}{2}\log t (1+o(1))$.  
The degree of any vertex of $G$ from layer $k$ is at most $k$, the degree of any vertex of $G$ in layer $k-1$ is at most $a-k+1 \leq  (2k+2)- k+1 = k+3$.   So $||G|| \leq (k+3) t/2 = \frac{1}{4}t\log t (1+o(1))$, as desired.\\

{\it Case 2. ~}  $x> 2k+2$\\
  In particular, $t = \binom{x}{k} > \binom{2k}{k}$.
Let $k'$ be the integer such that $\binom{2(k'-1)}{(k'-1)}<t\leq \binom{2k'}{k'}$. In particular $k'> k$.
Let $G''$ be obtained by shifting $G$ to layer $k'$, i.e., 
$V(G'') = \big\{v \cup \{a+1, \ldots, a+ (k'-k)\}:  v\in V(G)\big\}$. We have that $||G''||=||G||$ and $|G''|=|G|=t$.
Lemma \ref{supercompressed} gives a graph $G'$ that is super-colex-interval in layer $k'$,  and such that  $|G'|=t$ and $||G'||\geq ||G''||$.  Let $x'$ be the real number such that $t = \binom{x'}{k'}$. By the choice of $k'$  we have
$\binom{2k'-2}{k'-1} < \binom{x'}{k'} \leq \binom{2k'}{k'}$. 
The second inequality implies that  $x' \leq 2k'$.
We shall use the first inequality to show that $x' \geq 2k'-2$. If not, then  $x' <2k'-2$ and  $t = \binom{x'}{k'} < \binom{2k'-2}{k'} < \binom{2k'-2}{k'-1}$, a contradiction.
So, $2k'-2 \leq x' \leq 2k'$ and we are done by Case 1 with $k$ and $x$ replaced by $k'$ and $x'$. \\

{\it Case 3.~ }  $x< 2k-2$\\
Recall that $a<x$ and $k\leq a$, so in particular $k\leq a\leq  2k-3$ in this case. Then consider a vertex-wise complement  $G''$ of  $G$, i.e., an induced subgraph of $Q_a$ with a vertex set $\{[a] -v: v\in V(G)\}$. Then $G''$ is in the layer $k''= a+1-k$, it is isomorphic to $G$, so  $|G_1|=t$ and  $||G_1|| = e(t)$. 
Let $y$ be the real number such that $t = \binom{y}{k''}$.   Assume as before that $G''$ is a super-colex-interval.
If $y\ge 2k'' -2$, we are done by Cases 1 and 2.
So assume that $y<2k'' -2= 2a-2k$. Let $b=2k-a$. Note that $3\leq b\leq k $, 
$2a-2k=a-b$ and $k''=k-b+1$. Then $$\binom{a}{k}<\binom{x}{k}= t = \binom{y}{k''}<\binom{2a-2k}{k''}=\binom{a-b}{k-b+1}.$$
We have for any integers $0<t\leq s$ that $\binom{s+1}{t+1} >\binom{s}{t}$. 
Thus  $$\binom{a}{k}>\binom{a-b+1}{k-b+1}>\binom{a-b}{k-b+1},$$ a contradiction.\\

Therefore  $||G|| \leq \frac{1}{4} t \log t (1+o(1))$.\\

Note that if $G'$  is a middle edge layer of a hypercube $Q_n$ for some even $n$,  then $|G'| = t =\binom{n}{n/2} + \binom{n}{n/2-1}$ and  $||G'|| = \binom{n}{n/2} \frac{n}{2} = \frac{1}{4} t \log t (1+o(1))$. This implies that the largest size of a $t$-vertex layered graph is  $\frac{1}{4} t \log t (1+o(1))$, for any $t $ expressible as the sum $\binom{n}{n/2} + \binom{n}{n/2-1}$, for some even $n$. This shows that the upper bound in Proposition \ref{super} is tight for infinitely many values of $t$.
\end{proof}


\section{Conclusions}\label{conclusions}

The focus of this paper is to investigate the class of layered graphs and their Tur\'an density in the hypercube. Recall that graphs that are not layered have positive Tur\'an density in a hypercube.
First, we developed a characterisation of layered graphs in terms of very nice colorings, that is a convenient tool to analyse them.
Then, we proved that any odd subdivision of a complete graph is layered and has  zero Tur\'an density. Similarly, we showed that any even subdivision of any complete bipartite graph is layered, and for such a $k$-subdivision, where $k$ is divisible by $4$ and $k\geq 8$,  it also has  zero Tur\'an density.  This leaves first question:\\

{\bf Question 1.~} Which graphs  out of $T_2(K_{t,t})$, $T_4(K_{t,t})$, and $T_6(K_{t,t})$ have zero Tur\'an density for any $t$?\\

In addition, we showed that there are some cubical graphs that have girth $8$ and that are not layered. In particular, there are graphs of girth $8$ and of positive Tur\'an density in the hypercube.
This extends known results on graphs of girth $6$ and leads to another question:\\

{\bf Question 2.~} Are there graphs of arbitrarily large girth that are cubical but not layered?

As mentioned in the introduction, very recently this question was answered in the positive by Behague, Leader,  Morrison,  and Williams \cite{BLMW}.\\

Since the density of layered graphs could be close to the density of general cubical graphs, it  seems to be difficult  to find such a graph using direct  probabilistic methods. Nevertheless, the following question is of independent interest:\\

{\bf Question 3.~} What is the largest number of edges in a layered graph on $t$ vertices  for any positive integer $t$? \\

Graham \cite{G}, see also Bollob\'as \cite{Bo}, Hart \cite{H1}, and Chung, F\"uredi, Graham, and Seymour  \cite{CFGS},  determined the largest possible size of a cubical $t$-vertex graph by considering edge-cuts that are matching corresponding to color classes of nice colorings.   
Using Theorem \ref{very-nice} we have that any color class in a very-nice coloring of a layered graph is a cut that is an induced matching. This property might allow one to determine the largest density of a layered graph exactly. 
Although we did not manage to find the largest number of edges in a $t$ vertex layered graph even asymptotically, we believe that the answer should be $\frac{1}{4} t \log t (1+o(1))$, i.e., 
half of the corresponding quantity in case of cubical graphs. 
This question is related  to a class of classical isoperimetric questions since maximising the number of edges in an induced subgraph of a regular graph is equivalent to minimising the number of edges ``leaving'' this subgraph. Finally, we remark that it was proved by Haussler et al. \cite{Ha1, Ha2}, that the largest number of edges in a subgraph of a hypercube induced by $t$ vertices is at most $t$ times the VC-dimension of the set family corresponding to the vertex set.  \\

We made  modest progress towards determining the extremal number of $C_{10}$ in $Q_n$, the remaining case for cycles in a hypercube for which it is not known whether the Tur\'an density is zero or not. 
We proved that $C_{10}$ definitely behaves  differently  from known cycles of zero Tur\'an density in its extremal function, i.e., $\ex(Q_n, C_{10}) = \Omega (n2^n/ \log^b n)$, $b>0$, whereas for any other cycle $C$ of zero Tur\'an density
$\ex(Q_n, C) = O(n^a2^n)$, for some $a<1$.  After this paper was accepted for publication,  Grebennikov and Marciano \cite{GM} proved  that $C_{10}$ has positive Tur\'an density on the hypercube.\\

We note that the bounds on extremal numbers for subdivisions we obtain could be improved using a more efficient embedding. In Appendix A, we recall a general approach introduced by  Chung that might give better upper bounds for some $1$-subdivisions.
Finally, by explicitly constructing partite embeddings of subdivisions, we came up with a quite symmetric way to embed vertices of a hypercube in a layer of a larger hypercube such that adjacent vertices are embedded into pairs of vertices at a  fixed distance.  As it might be of independent interest, we present this construction in  Appendix B. \\

\section{Acknowledgements} The research of the first and the third authors was supported in part by a DFG grant FKZ AX 93/2-1. First author thanks Iowa State University for hospitality during research visit.  The research of the second author was partially supported by a grant from the Simons Foundation (\#709641). The authors thank Chris Cox, Emily Heath, and Bernard Lidick\'y for helpful  discussions, and the anonymous referee for constructive comments on the manuscript. In addition, the authors thank Imre Leader for comments and authors Alexandr Grebennikov and Jo\~ao Pedro  Marciano for pointing out that the constant in Lemma \ref{lem:C6-} given by the proof is $4$ and not $3$.

\section{Conflict of interest} The authors have no relevant financial or non-financial interests to disclose.

\section{Appendix A:~  Another upper bound on extremal number for subdivisions}

\begin{theorem}
Let  $H'$ be a bipartite graph such that  $H=T_{1}(H')$ is cubical. Then $\ex(Q_n, H)= o(||Q_n||)$. 
\end{theorem}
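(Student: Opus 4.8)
The plan is to exhibit a \emph{$2$-partite representation} of $H=T_1(H')$ and then feed it into Conlon's transference result, Theorem~\ref{Conlon-rep-exponent}, together with the Kővári–Sós–Turán bound that is exactly the $k=2$ instance of Theorem~\ref{Erdos}. The guiding idea is that the $1$-subdivision of a \emph{bipartite} graph sits naturally in the first two layers of a hypercube: the branch vertices occupy layer $1$ and the subdivision vertices occupy layer $2$, generalising the $2$-partite representation $12,23,34,14$ of the $8$-cycle $C_8=T_1(C_4)$ given earlier in the paper.

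Concretely, I would write $H'$ with bipartition $V(H')=P\cup Q$ and take the ground set $[n]=V(H')$. First, map each branch vertex $v$ to the singleton $\{v\}\in\binom{[n]}{1}$, and each subdivision vertex $z_e$ inserted into an edge $e=pq$ (with $p\in P$, $q\in Q$) to the pair $\{p,q\}\in\binom{[n]}{2}$. Since $H'$ is simple this map is injective, and a branch vertex $v$ is adjacent in $H$ to $z_e$ precisely when $v\in e$, i.e.\ precisely when $\{v\}\subset\{p,q\}$; hence adjacencies are preserved and $H$ embeds into layers $1$ and $2$ of $Q_n$. In particular this re-establishes that $H$ is cubical (indeed layered), so the hypothesis is never needed directly. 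The key point is that, because $H'$ is bipartite, every pair $\{p,q\}$ meets each of $P$ and $Q$ in exactly one element, so the layer-$2$ vertices form a $2$-partite $2$-uniform hypergraph with parts $P$ and $Q$; this hypergraph is exactly $H'$ viewed as a bipartite graph. Thus $\cH:=H'$ serves as a $2$-partite representation of $H$.

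It then remains to bound $\ex_2(t,\cH)=\ex_2(t,H')$, the ordinary Turán number of the bipartite graph $H'$. Letting $r\le s$ be the sizes of $P,Q$, we have $H'\subseteq K_{r,s}$, so $\ex_2(t,H')\le\ex_2(t,K_{r,s})$, and Theorem~\ref{Erdos} with $k=2$, $\ell_1=r$, $\ell_2=s$, and $\delta=\ell_1=r$ gives
\[
\ex_2(t,H')\le\ex_2(t,K_{r,s})=O\!\left(t^{2-1/r}\right)=O(\alpha t^2),\qquad \alpha=t^{-1/r}.
\]
Feeding this into Theorem~\ref{Conlon-rep-exponent} with $k=2$ (so that the relevant hypergraph has $n$ vertices and $\alpha=n^{-1/r}$) would yield
\[
\ex(Q_n,H)=O\!\left(\alpha^{1/2}\,n2^n\right)=O\!\left(n^{-1/(2r)}\,n2^n\right)=o(n2^n)=o(||Q_n||),
\]
which is the desired conclusion.

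I do not expect a serious obstacle here: once the $2$-partite representation is spotted, the argument reduces entirely to the bipartite Turán bound and Conlon's machinery. The only thing to watch is the bookkeeping of the two layers — one must place the \emph{branch} vertices in layer $1$ and the \emph{subdivision} vertices in layer $2$, since it is the subdivision vertices (the edges of $H'$) that must furnish the $2$-uniform $2$-partite hypergraph, and this is exactly where bipartiteness of $H'$ is used. Note that the estimate is robust to degenerate cases (for instance if $H'$ is a forest the true Turán number is only $O(t)$), since we only need the exponent $2-1/r$ to be strictly below $2$ to obtain $o(||Q_n||)$.
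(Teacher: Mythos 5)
Your proof is correct, but it is not the route the paper takes for this statement. The paper's Appendix A proof is a supersaturation-style argument in the spirit of Chung: given a subgraph $G$ of $Q_n$ with $c\,||Q_n||$ edges, it counts paths of length $2$ ($\sum_u \binom{d(u)}{2} \geq \binom{cn}{2}2^n$) to find a vertex $v$ whose link graph $G_v$ on $N(v)$ has $\geq c'n^2$ edges, invokes bipartiteness of $H'$ only through $\ex(n,H')=o(n^2)$ to find a copy of $H'$ inside $G_v$, and then lifts each edge $xx'$ of that copy to its unique second common neighbor $w\neq v$ in $Q_n$, producing $T_1(H')$. You instead exhibit the natural $2$-partite representation (branch vertices as singletons in layer $1$, subdivision vertices as pairs in layer $2$, with bipartiteness of $H'$ entering exactly where you say: it makes the family of pairs a $2$-partite $2$-uniform hypergraph) and then run Theorem \ref{Conlon-rep-exponent} with the $k=2$ case of Theorem \ref{Erdos}; this is the same machinery the paper deploys in Section \ref{sec:subdivisions} for $T_{2k+1}(K_t)$ and $T_{2k}(K_{t,t})$, and your use of Conlon's theorem with $\alpha=n^{-1/r}$ matches the paper's own usage there. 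The trade-off is instructive: your route yields an explicit polynomial saving, $\ex(Q_n,H)=O\bigl(n^{1-1/(2r)}2^n\bigr)$ with $r$ the smaller part of $H'$, and also shows (as you observe) that the cubicality hypothesis is automatic since the embedding lands in a single edge layer; the paper's argument, as stated, gives only $o(||Q_n||)$, but it transfers \emph{any} upper bound on $\ex(n,H')$ rather than just the K\H{o}v\'ari--S\'os--Tur\'an bound for $K_{r,s}\supseteq H'$, so for bipartite $H'$ whose Tur\'an number is well below the KST bound (forests, $C_4$-free graphs, etc.) it can beat the exponent your representation argument produces --- which is precisely why the paper remarks in Section \ref{conclusions} that Chung's approach ``might give better upper bounds for some $1$-subdivisions.''
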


\begin{proof}
We shall use a typical argument introduced by Chung \cite{chung}.  Fix any positive constant $c$ and consider a spanning subgraph $G$  of $Q_n$ with $c||Q_n||$ edges.  Then the average degree of $G$ is $cn$.
For each vertex $v$, create an auxiliary graph $G_v$ with vertex set $N(v)$ in $Q_n$ and two vertices $x, x' \in N(v)$ adjacent in $G_v$ if and only if  there is a vertex $w\neq v$ and 
two edges $wx, wx'$ in $G$.  Note that $w\not\in N(v)$ because $Q_n$ is triangle-free. We claim that there is a vertex $v$ such that $||G_v||\geq c'n^2$ for a positive constant $c'$.
\\

Note that there is no copy of $K_{2,3}$ in $Q_n$.  Moreover, for any two vertices at distance $2$ in $Q_n$ there are exactly two paths of length $2$ in $Q_n$ having these two vertices as endpoints. Thus, for each path $xwy$ of length $2$ in $G$  there is a unique vertex $v$ such that  $xy\in E(G_v)$. In addition, for any edge $xy\in E(G_v)$ there is exactly one path $xwy$, $w\neq v$ in $G$. So, the set of  edges of  all $G_v$'s, $v\in Q_n$  is in a bijective correspondence with the set of paths of length $2$ in $G$.  The number of such paths is $\sum_{u\in V(Q_n)} \binom{d(u)}{2} \geq  \binom{cn}{2} 2^n$. Thus, there is a $v$ such that $||G_v|| \geq \binom{cn}{2} 2^n/2^n  = c' n^2$, for a positive constant $c'$.
Since $H'$ is bipartite,  $\ex(n, H') =o(n^2)$, thus $G_v$ contains $H'$ as a subgraph. 
For any two distinct edges $e$ and $e'$ of this copy of $H'$, there are vertices $w, w' \not\in \{v\}\cup N(v)$ such that $w$ and the endpoints of $e$ form a path of length $2$. Similarly $w'$ and the endpoints of $e'$ form a path length $2$ with $w$ and $w'$ being central vertices on these paths.  Note that $w$ and $w'$ are distinct since $K_{2,3}$ is not a subgraph of $Q_n$.   Thus $G$ contains $T_1(H')$ as a subgraph. This implies that $\ex(Q_n, H)= o(||Q_n||)$. 
\end{proof}

\section{Appendix B: ~ Embedding of  vertices of $Q_n$ into two consecutive layers of $Q_N$  with adjacent vertices in $Q_n$ at a fixed given  distance in $Q_N$.}

While we presented a layered embedding of the subdivision of any bipartite graph in the main body of the paper, 
here we present a more symmetric embedding of $V(Q_n)$. It in turn could be extended to embed subdivisions of $Q_n$ and not only their branch vertices.
This contributes to a large body of research on embeddings in hypercubes that focuses on more efficient embeddings, see for example \cite{LS, FHT, AHL, VS, BCHRS, AS, MARR,  H, B, HS, X, Chen, LT}.

\begin{theorem}
For any integer $m\geq 2$ and any positive integer $n$, there exist an integer $N$ and  a function $F: V(Q_n) \rightarrow V(Q_N)$, such that 
for any two vertices $u$ and $v$ which are adjacent in $Q_n$,  $d_H(F(u), F(v))=m$ and $F$ maps all vertices of $Q_n$ either in one vertex layer of $Q_N$ (if $m$ is even) or 
in two consecutive vertex layers of $Q_N$ (if $m$ is odd).
\end{theorem}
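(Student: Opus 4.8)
The plan is to construct $F$ explicitly by concatenating several shifted copies of a simple ``complement-doubling'' map, together with one parity coordinate in the odd case. Throughout I identify each vertex $x\in V(Q_n)$ with a binary vector of length $n$, write $\overline{x}$ for its coordinatewise complement, and let $|x|$ denote the number of ones in $x$, so that $|x|+|\overline{x}|=n$.

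The basic gadget is the map $x\mapsto(x,\overline{x})\in V(Q_{2n})$. Since $|x|+|\overline{x}|=n$, every image has weight $n$, so this map lands entirely in the single layer $V_n$; and if $u,v$ are adjacent in $Q_n$, differing in exactly coordinate $i$, then $(u,\overline{u})$ and $(v,\overline{v})$ differ in precisely the two coordinates corresponding to position $i$ in the two blocks, giving $d_H=2$. This already settles $m=2$. For general even $m=2s$ I would take $s$ side-by-side copies: set $N=2sn$ and
\[
F(x)=(x,\overline{x},x,\overline{x},\ldots,x,\overline{x})\qquad(s\text{ pairs}).
\]
Every image has weight exactly $sn$, so $F$ maps into the single layer $V_{sn}$, and flipping one coordinate of $x$ flips exactly two coordinates in each of the $s$ pairs, yielding $d_H(F(u),F(v))=2s=m$. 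Injectivity is immediate because the first $n$ coordinates recover $x$.

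For odd $m=2s+1$ (so $s\ge1$) I would append a single parity coordinate $\pi(x)=|x|\bmod 2$ to the even construction for $2s$: set $N=2sn+1$ and
\[
F(x)=(x,\overline{x},\ldots,x,\overline{x},\pi(x))\qquad(s\text{ pairs, then one parity bit}).
\]
The $2sn$-coordinate prefix again has constant weight $sn$ and contributes $2s$ to the distance of any edge; since adjacent $u,v$ have weights differing by one, their parities differ, so $\pi(u)\neq\pi(v)$ contributes exactly one more, giving $d_H(F(u),F(v))=2s+1=m$. The total weight of $F(x)$ is $sn+\pi(x)\in\{sn,sn+1\}$, so the whole image lies in the two consecutive layers $V_{sn}\cup V_{sn+1}$, as required.

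The only point needing genuine care is the two-layer constraint in the odd case: a generic distance-$m$ embedding would scatter the image across many layers, and the role of the parity coordinate is precisely to absorb the unavoidable weight fluctuation of an odd-length step into a single bit whose value depends only on the parity of $|x|$. Because $Q_n$ is bipartite with parts exactly the even- and odd-weight vertices, this bit is globally consistent, which is what confines the entire image to two consecutive layers rather than spreading it out. I expect this parity-consistency observation to be the main (and essentially the only) conceptual step; everything else reduces to the direct weight and Hamming-distance computations indicated above.
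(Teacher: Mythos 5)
Your construction is correct and is essentially the paper's own, up to a permutation of coordinates: your $s$-fold repetition of $(x,\overline{x})$ is exactly the paper's map $f_k$ for even $m=2k+2$ (each coordinate encoded by a balanced block of complementary bits), and your single parity bit reproduces the paper's distance-$3$ map $f$. The only divergence is for odd $m>3$, where the paper pads its $m=3$ map with $2\ell$ parity-dependent coordinates $0^{\ell}1^{\ell}$ versus $1^{\ell}0^{\ell}$, whereas you scale the even gadget and keep one parity bit --- a slightly more uniform packaging of the same idea.
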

\begin{proof}

Here, we shall present functions $f, f', f_k$ mapping $V(Q_n)$ into the vertex set of some larger hypercube, for a fixed $k\in\mathbb{N}\cup \{0\}$  such that 
for any two adjacent in $Q_n$ vertices $u$ and $v$,  $d_H(f(u), f(v))= d_H(f'(u), f'(v))=3$ and $d_H(f_k(u), f_k(v))=2k+2$. 
Moreover, both $f$ and $f'$ map vertices of $Q_n$ into two consecutive vertex layers  and $f_k$ maps vertices of $Q_n$ into one layer. We shall then define $F$ based on one of the functions $f, f'$, or $f_k$.\\

For any vector $w$, let  $w[i]$ denote the $i$th component of $w$ and $||w||$ denote the number of $1$'s in $w$. \\

Let $[(2k+2)n]$ be split into $n$ consecutive intervals of length $2k+2$.   For a binary vector $w$ of length $(2k+2)n$, let $w[[i]]$ be $w$ restricted to the $i$th interval of length $2k+2$. Formally, $w[[i]] = w[(2k+2)(i-1)+1] w[(2k+2)(i-1)+2] \cdots w[(2k+2)i]$. We define $f_k: V(Q_n) \rightarrow V(Q_{(2k+2)n})$ as follows:

\begin{equation} \nonumber
f_k(v)[[i]] = \begin{cases}
0101\cdots 01,  \mbox{ if }  v[i]=0, \\
1010\cdots 10,  \mbox{ if }  v[i]=1.
\end{cases}
\end{equation}

\vskip 1cm

Let $[2n+1]$ be split into $n$ consecutive intervals of length $2$ and one last element. For a binary vector $w$ of length $2n+1$, let $w[[i]]$ be a triple corresponding to $w$ restricted to the $i$th interval and the last element, i.e., $w[[i]]= w[2i-1] w[2i] w[N']$.   We define $f: V(Q_n)\rightarrow V(Q_{2n+1})$ as follows:

\begin{equation}\nonumber
f(v)[[i]] = \begin{cases}
010,  \mbox{ if }  v[i]=0  \mbox{ and  }  ||v|| \mbox{ is even, }  \\
100,  \mbox{ if }  v[i]=1  \mbox{ and  }  ||v|| \mbox{ is even, }  \\
011,  \mbox{ if }  v[i]=0  \mbox{ and  }  ||v|| \mbox{ is odd, }  \\
101,  \mbox{ if }  v[i]=1  \mbox{ and  }  ||v|| \mbox{ is odd. }  \\
\end{cases}
\end{equation}

It is clear here that if $u$ and $v$ are adjacent in $Q_n$, the images $f(u)$ and $f(v)$ are at Hamming distance $3$.

\vskip 1cm	

Let $[3n]$ be split into $n$ consecutive intervals of length $3$.  For a binary vector $w$ of length $3n$, let $w[[i]]$ be a triple corresponding to $w$ restricted to the $i$th interval  $w[[i]]= w[3i-2] w[3i-1] w[3i]$.  We define $f':V(Q_n)\rightarrow V(Q_{3n})$ as follows:

Let 
\begin{equation} \nonumber
f'(v)[[i]] = \begin{cases}
010,  \mbox{ if }  v[i]=0, \\
100,  \mbox{ if }  v[i]=1 ~ \mbox{ and   } ||v|| \mbox{ is even}, \\
101,  \mbox{ if }  v[i]=1,  v[j]=0 \mbox{ for any } j>i, ~ \mbox{ and   } ||v|| \mbox{ is odd},\\
100,  \mbox{ if }  v[i]=1,  v[j]=1 \mbox{ for some } j>i, ~ \mbox{ and   } ||v|| \mbox{ is odd}.
\end{cases}
\end{equation}		
		
Assume that $v$ and $u$ are adjacent in $Q_n$ and differ in position $i$ such that $v$ is zero in this position.
We shall verify that the distance between $f(v)$ and $f(u)$ is $3$.
Note that $f(v)$ and $f(u)$ coincide in all triples corresponding to $0$'s of $u$. Moreover, they coincide on those triples $\ell$, where $u[\ell]=1$, $j\neq i$, and $\ell$ is not a position of the last $1$ of $u$ or $v$.  Let $j$ be the last position of $1$ in $u$. Note that $i$ could be equal to $j$.  \\

If $w(v)$ is even, then $w(u)$ is odd and $f(u)[[j]]=101$. If $i=j$, then $f(v)[[j]]=010$ and $f(u)$ and $f(v)$ coincide in all other triples.  If $i<j$,  then $u[j]=v[j] =1$, $f(u)[[j]]=101$, $f(v)[[j]]=100$, 
$f(u)[[i]]=100$, and $f(v)[[i]]=010$. On all other triples $f(v)$ and $f(u)$ coincide. We see that in both cases $f(u)$ and $f(v)$ are at distance $3$.\\

If $w(v)$ is odd, then $w(u)$ is even and $f(u)[[i]]=100$. 
Let $k$ be the last position of $1$ in $v$. So, $f(v)[[k]]= 101$. 
We also have $f(u)[[k]]=100$ and $f(v)[[i]]= 010$. Then $f(u)$ and $f(v)$ are at distance $3$.\\~\\

Now, let $m\geq 2$ be given. If $m$ is even, let $m=2k+2$, for non-negative integer $k$. Then let $F(u) = f_k(u)$ for any $u\in V(Q_n)$. 
If $m$ is odd and $m=3$ let $F(u) = f(u)$ for any $u\in V(Q_n)$. 
If $m$ is odd and $m= 3 +2\ell$ for some positive integer $\ell$, we define $F$ by considering either $f$ or $f'$ and adjusting $2\ell$ coordinates to each embedded vertex 
that are $0\cdots 0 1 \cdots 1$ or $ 1\cdots 1 0\cdots 0$, depending whether the vertex is embedded in one layer of the other. 
Formally, in case of $f$, for example, let  $N= 2n+1+ 2\ell$ and let for any vertex $u$ of $Q_n$, $F(u)$ restricted to the first $2n+1$ coordinates be $f(u)$. 
In addition, if $w(u)$ is even, let the last $2\ell$ coordinates of $F(u)$ be $0\cdots 0 1 \cdots 1$ and 
if $w(u)$ is odd, let the last $2\ell$ coordinates of $F(u)$ be $1\cdots 1 0 \cdots 0$,  with $\ell$ $0$'s and $\ell$ $1$'s respectively. 
\end{proof}

\end{document}